\newtheorem{theorem}{Theorem}
\newtheorem{prop}{Proposition}
\newtheorem{lem}{Lemma}
\def\BibTeX{{\rm B\kern-.05em{\sc i\kern-.025em b}\kern-.08em
	T\kern-.1667em\lower.7ex\hbox{E}\kern-.125emX}}
\newcommand{\jp}[1]{\textcolor{blue}{Josh: #1}}
\renewcommand{\Re}{\mathbb{R}}
\renewcommand{\d}{\mathrm{d}}
\begin{document}

\title{Covariance Steering with Optimal Risk Allocation\\}
\author{%
	Joshua Pilipovsky\textsuperscript{*} \thanks{\textsuperscript{*} J. Pilipovsky is a Graduate student at the School of Aerospace Engineering, Georgia Institute of Technology, Atlanta, GA 30332-0150, USA. Email: jpilipovsky3@gatech.edu} 
	~~~~
	~~~~
	Panagiotis Tsiotras\textsuperscript{$\dagger$} \thanks{\textsuperscript{$\dagger$} P. Tsiotras is the David \& Lewis Chair and Professor at the School of Aerospace Engineering and the Institute for Robotics \& Intelligent Machines, Georgia Institute of Technology, Atlanta, GA 30332-0150, USA. Email: tsiotras@gatech.edu}
}%
\maketitle

\begin{abstract}
	This paper extends the optimal covariance steering problem for linear stochastic systems subject to chance constraints so as  
	to account for an optimal allocation of the risk.
	Previous works have assumed a \textit{uniform} risk allocation  in order to cast the optimal control problem as a semi-definite program (SDP), which can be solved efficiently using standard SDP solvers. 
	An  Iterative Risk Allocation (IRA) formalism is used to solve the optimal risk allocation problem for covariance steering using a two-stage approach. 
	The upper-stage of IRA optimizes the risk, which is  a convex problem, while the lower-stage optimizes the controller with the new constraints.
	The process is applied 
	iteratively until the optimal risk allocation that achieves the lowest total cost is obtained.
	The proposed framework results in solutions that tend to maximize the terminal covariance, while still satisfying the chance constraints, thus leading to less conservative solutions than previous methodologies.
	In this work, we consider both polyhedral and cone state chance constraints.
	%	For the case of the latter, we introduce two novel convex relaxation methods to approximate quadratic chance constraints as second-order cone constraints. 
	%	Additionally, we incorporate \textit{hard} input constraints, extending the work of~\cite{IH} for polyhedral chance constraints.
	Finally, we demonstrate the approach to a spacecraft rendezvous problem and compare the results with other competing approaches.
\end{abstract} 

\section{Introduction}

In this paper we address the problem of finite-horizon stochastic optimal control of a discrete linear time-varying (LTV) system with white-noise Gaussian diffusion. 
The control task is to steer the state from an initial Gaussian distribution to a final Gaussian distribution with known statistics. 
In addition to the boundary conditions, we consider chance constraints that restrict the probability of violating state constraints to be less than a certain threshold.
In general, 
hard state constraints are difficult to impose in stochastic systems because the noise can be unbounded, so chance constraints are typically used to deal with this problem by imposing a small, but finite, probability of violating the constraints. 
In the literature, one encounters two kinds of chance constraints; \textit{individual} chance constraints and \textit{joint} chance constraints~\cite{b3}. 
Individual chance constraints limit the probability of violating each constraint, while joint chance constraints limit the probability of violating \textit{any} constraint over the whole time horizon. 
In this paper, we consider the case of joint chance constraints, because they are a more natural choice for most applications of interest.

Control of stochastic systems can be best formulated as a problem of controlling the distribution of trajectories over time. 
Moreover, Gaussian distributions are completely characterized by their first and second moments, so the control problem can be thought of as one of 
steering the mean and the covariance to the desired terminal values. 
The problem of covariance control has a history dating back to the '80s, with the works of Hotz and Skelton \cite{HS1,HS2}. 
Much of the early work focused on the \textit{infinite} horizon problem, where the state covariance \textit{asymptotically} approaches its terminal value.
Only recently has the finite-horizon problem drawn attention, with much of the early work focusing on the so-called covariance steering (CS) problem,
namely, the problem of steering an initial distribution to a final distribution at a specific final time step subject to linear time-varying (LTV) dynamics. 
The problem can be thought of as a linear-quadratic Gaussian (LQG) problem with a condition on the terminal covariance~\cite{EB2}.
Moreover, it has been shown that the finite-horizon covariance steering controller can be constructed as a state-feedback controller, and the problem 
can be formulated either as a convex program~\cite{EB2,EB3}, or as the solution of a pair of Lyapunov differential equations coupled through their boundary conditions~\cite{Halder,Chen}.
Alternatively, for certain special cases one can solve the CS problem directly by solving an LQ stochastic problem with a particular choice of cost weights~\cite{Max1}.
Other approaches~\cite{EB1,PP} use an affine-disturbance feedback controller having two components, one that steers the mean state and the other that steers the covariance. 

Steering the covariance is related to the theory of steering marginal distributions, which has a long history, stemming from the problem of Schr\"{o}dinger bridges and optimal mass transport~\cite{Chen,Chen2,Chen3,OT}. 
Most recent work on covariance steering  has focused on incorporating physical constraints on the system, such as state chance constraints~\cite{Max2}, obstacles in path-planning environments~\cite{PP}, input hard constraints~\cite{IH}, incomplete state information~\cite{incompleteState}, and extensions in the context of stochastic model predictive control~\cite{CSMPC} and nonlinear systems~\cite{b2,NL2,NL3}.

In this work, we first review the formalism for the Covariance Steering Chance Constraint (CSCC) problem, to account for \textit{optimal} risk allocation.
By risk allocation we mean allocating the probability of violating each individual chance constraint at each time step. 
For example, if there are $M$ chance constraints and $N$ time steps, there would be $NM$ total allocations for the whole problem. 
Previous works~\cite{Max1,Max2,PP,IH,CSMPC} have assumed a constant risk allocation, so that the resulting problem can be turned into a semi-definite program (SDP). 
Here, however, we adopt an iterative two-stage algorithm that optimizes the risk distribution over all time steps, and subsequently optimizes the controller by solving a SDP.
Other works have tried to optimize the risk using techniques such as ellipsoidal relaxation\cite{ER} and particle control\cite{PC}. 
However, ellipsoidal relaxation techniques are overly conservative and lead to highly suboptimal solutions. 
Particle control methods are computationally too demanding, since the number of decision variables grows with the number of particles. 
The two-stage risk allocation scheme proposed in this work is computed iteratively until the cost is within a given tolerance of the minimum, from which we get the \textit{optimal} risk allocation for the problem, as well as the optimal controller.

The contributions of this work are as follows:
First, we propose the first work that solves the covariance steering problem while optimally allocating the risk of violating chance constraints.
Second, we extend previous similar results in the literature that only deal with polyhedral constraints to the case of cone constraints.
Third, we provide a result of independent interest for convexifying the chance constraints for the case of conical regions.
%Fourth, using the results from \cite{IH}, we incorporate \textit{hard} input constraints into the problem formulation for both polyhedral and cone chance constraints.
Finally, and in order to illustrate the proposed risk allocation algorithm, we use as an example a rendezvous problem between two spacecraft, in which the approaching spacecraft's control inputs are constrained to be within some bounds, while its position should remain within a predetermined LOS region during the whole maneuver.
Both polyhedral and cone LOS constraints are investigated and compared. 

This work extends the preliminary results of~\cite{JP1} along several directions.
First, while in \cite{JP1} only polyhedral chance constraints  were handled, in this work we also treat the important -- and much more difficult --  
case of cone constraints. 
Indeed, in many applications the constraints are given in the form of a conical region (e.g., line-of-sight (LOS) constraints).
Approximating such cone constraints with many intersecting hyperplanes would make the problem computationally expensive and quite challenging in terms of achieving high accuracy approximations.
Second, we also present a way to approximate  such cone chance constraints as special cases of general \textit{quadratic constraints} in terms of two-sided polyhedral constraints. 
Third, we apply this formulation to the case of LOS cone chance constraints, and compare with a polyhedral approximation.
Fourth, we present a geometric relaxation of the cone chance constraints, which is more natural in real-life applications.
Finally, we compare our results with stochastic MPC-type approaches.

The paper is structured as follows: 
In Section~II we define the general stochastic optimal control problem for steering a distribution from an initial Gaussian to a terminal Gaussian with joint state chance constraints. 
In Section~III we review the two-stage risk allocation formalism, and formulate the SDP for the optimal controller as well as the proposed iterative risk allocation algorithm.
In Section~IV we present two different convex relaxations of quadratic chance constraints, one in terms of a two-sided linear constraint relaxation, and the other based on
a geometrical construction.
Finally, in Section~V we implement the theory to the spacecraft rendezvous and docking problem with both polyhedral and cone chance constraints, and compare with stochastic MPC methods.

%---------------------------------------------------%
\section{Problem Statement}

We consider the following discrete-time stochastic time-varying system subject to noise
\begin{equation}~\label{eq:1}
	x_{k+1} = A_kx_k + B_ku_k + D_kw_k,
\end{equation}
where $x\in\mathbb{R}^n,u\in\mathbb{R}^m$, with time steps $k = 0,\ldots,N-1$, where $N$ representing the finite horizon. 
The uncertainty $w\in\mathbb{R}^r$ is a zero-mean white Gaussian noise with unit covariance, i.e., $\mathbb{E}[w_k] = 0$ and $\mathbb{E}[w_{k_1}w_{k_2}^\intercal] = I_{r}\delta_{k_1,k_2}$. Additionally, we assume that $\mathbb{E}[x_{k_1}w_{k_2}^\intercal] = 0$, for $0\leq k_1 \leq k_2 \leq N$. The initial state $x_0$ is a random vector drawn from the normal distribution
\begin{equation}~\label{eq:2}
	x_0 \sim \mathcal{N}(\mu_0,\Sigma_0),
\end{equation}
where $\mu_0\in\mathbb{R}^n$ is the initial state mean and $\Sigma_0\in\mathbb{R}^{n\times n} > 0$ is the initial state covariance. The objective is to steer the trajectories of (\ref{eq:1}) from the initial distribution (\ref{eq:2}) to the terminal distribution
\begin{equation}~\label{eq:3}
	x_f \sim \mathcal{N}(\mu_f,\Sigma_f),
\end{equation}
where $\mu_f\in\mathbb{R}^n$ and $\Sigma_f > 0$ are the state mean and covariance at time $N$, respectively. The cost function to be minimized is 
\begin{equation}~\label{eq:4}
	J(u_0,\ldots,u_{N-1}) \coloneqq \mathbb{E}\bigg[\sum_{k=0}^{N-1}x_k^\intercal Q_k x_k + u_k^\intercal R_k u_k\bigg],
\end{equation}
where $Q_k\geq 0$ and $R_k > 0$ for all $k = 0,\ldots,N-1$. Additionally, and over the \textit{whole} horizon, we impose the following joint chance constraint that limits the probability of state violation to be less than a pre-specified threshold, i.e.,
\begin{equation}~\label{eq:5}
	\mathbb{P}\bigg(\bigwedge_{k=0}^{N} x_k\notin\mathcal{X}\bigg) \leq \Delta,
\end{equation}
where $\mathbb{P}(\cdot)$ denotes the probability of an event, $\mathcal{X}\subset\mathbb{R}^n$ is the state constraint set, and $\Delta\in(0,0.5]$.

\noindent
\textit{Remark 1}: We assume that the system (\ref{eq:1}) is controllable, that is, for any $x_0,x_f\in\mathbb{R}^n$, and no noise ($w_k\equiv 0,~ k = 0,\ldots,N-1$), there exists a sequence of control inputs $\{u_k\}_{k=0}^{N-1}$ that steer the system from $x_0$ to $x_f$.

First, we provide an alternative description of the system (\ref{eq:1}) in order to solve the problem at hand. Using \cite{Max1,Max2,PP,IH,CSMPC}, we can reformulate (\ref{eq:1}) as
\begin{equation}~\label{eq:6}
	X = \mathcal{A}x_0 + \mathcal{B}U + \mathcal{D}W,
\end{equation} 
where $X\coloneqq [x_0^\intercal,...x_N^\intercal]^\intercal\in\mathbb{R}^{(N+1)n}, U\coloneqq[u_0^\intercal,...u_{N-1}^\intercal]^\intercal\in\mathbb{R}^{Nm}$, and $W\coloneqq [w_0^\intercal,...,w_{N-1}^\intercal]^\intercal\in\mathbb{R}^{Nr}$ are the state, input, and disturbance sequences, respectively. The matrices $\mathcal{A},\mathcal{B}$, and $\mathcal{D}$ are defined accordingly \cite{Max1}. Using this notation, we can write the cost function compactly as 
\begin{equation}~\label{eq:7}
	J(U) = \mathbb{E}[X^\intercal\bar{Q}X + U^\intercal\bar{R}U], 
\end{equation}
where $\bar{Q}$ and $\bar{R}$ are defined accordingly. 
Note that since $Q_k \geq 0$ and $R_k > 0$ for all $k = 0,\ldots,N-1$, it follows that $\bar{Q} \geq 0$ and $\bar{R} > 0$. 
The initial and terminal conditions (\ref{eq:2}) and (\ref{eq:3}) can be written as
\begin{equation}~\label{eq:8}
	\mu_0 = E_0\mathbb{E}[X], \qquad \Sigma_0 = E_0\Sigma_XE_0,
\end{equation}
and
\begin{equation}~\label{eq:9}
	\mu_f = E_N\mathbb{E}[X], \qquad \Sigma_f = E_N\Sigma_XE_n,
\end{equation}
where $\Sigma_X \coloneqq \mathbb{E}[XX^\intercal] - \mathbb{E}[X]\mathbb{E}[X]^\intercal$, and 
$E_k \coloneqq [0_{n,kn},I_{n},0_{n,(N-k)n}]$ picks out the $k$th component of a vector.
Consequently, the state chance constraints (\ref{eq:5}) can be written as 
\begin{equation}~\label{eq:10}
	\mathbb{P}\bigg(\bigwedge_{k=1}^{N}E_kX\notin\mathcal{X}\bigg) \leq \Delta.
\end{equation}
In summary, we wish to solve the following stochastic optimal control problem.
\\ \\
\noindent
\textit{Problem 1}~\label{problem:1}: Given the system (\ref{eq:6}), find the optimal control sequence $U^* \coloneqq U^*_{N-1}$ that minimizes the objective function (\ref{eq:7}), subject to the initial state (\ref{eq:8}), terminal state (\ref{eq:9}), and the state chance constraints (\ref{eq:10}). 
%---------------------------------------------------%

\section{Chance Constrained Covariance Steering with Risk Allocation}

\subsection{Lower-Stage Covariance Steering}

Borrowing from the work in \cite{PP}, we adopt the control policy 
\begin{equation}~\label{eq:11}
	u_k = v_k + K_ky_k,
\end{equation}
where $v_k\in\mathbb{R}^m,K_k\in\mathbb{R}^{m\times n}$, and $y_k\in\mathbb{R}^n$ is given by
\begin{subequations}~\label{eq:12}
	\begin{align}
		y_{k+1} &= A_ky_k + D_kw_k,\\
		y_0 &= x_0 - \mu_0.
	\end{align}
\end{subequations}
\textit{Remark 2}: The proposed control scheme (\ref{eq:11})-(\ref{eq:12}) leads to a convex programming formulation of Problem 1 as follows.

Using (\ref{eq:11})-(\ref{eq:12}), we can write the control sequence as 
\begin{equation}~\label{eq:13}
	U = V + KY,
\end{equation}
where $Y := \mathcal{A}y_0 + \mathcal{D}W\in\mathbb{R}^{(N+1)n}, V\coloneqq [v_0^\intercal,\ldots,v_{N-1}^\intercal]^\intercal\in\mathbb{R}^{Nm}$ and $K\in\mathbb{R}^{Nm\times (N+1)n}$
a matrix containing the gains $K_k$. 
It follows that the dynamics can be decoupled into a mean and error state as follows
\begin{align}
	\bar{X} &= \mathbb{E}[X] = \mathcal{A}\mu_0 + \mathcal{B}V, \label{eq:14}\\
	\tilde{X} &= X - \bar{X} = (I+\mathcal{B}K)Y.\label{eq:15}
\end{align}
Additionally, by noting that the cost function (\ref{eq:7}) can be decoupled into a mean cost $J_{\mu} = \bar{X}^\intercal\bar{Q}\bar{X} + \bar{U}^\intercal\bar{R}\bar{U}$ and a covariance cost $J_{\Sigma} = \textrm{tr}(Q\Sigma_{X}) + \textrm{tr}(Q\Sigma_{U})$, the cost function takes the form \cite{PP}
\begin{align}~\label{eq:16}
	J(V,K) = &(\mathcal{A}\mu_0 + \mathcal{B}V)^\intercal \bar{Q}(\mathcal{A}\mu_0 + \mathcal{B} V) + V^\intercal \bar{R} V\nonumber\\
	+ \ \text{tr}\Big[\big( &(I+\mathcal{B}K)^\intercal\bar{Q}(I+\mathcal{B}K) + K^\intercal\bar{R} K\big)\Sigma_Y\Big],
\end{align}
where $\Sigma_Y\coloneqq \mathcal{A}\Sigma_0\mathcal{A}^\intercal + \mathcal{D}\mathcal{D}^\intercal$. The terminal constraints can be reformulated as 
\begin{subequations}
	\begin{align}
		\mu_f &= E_N(\mathcal{A}\mu_0+\mathcal{B}V), \label{eq:17a}\\
		\Sigma_f &= E_N(I+\mathcal{B}K)\Sigma_Y(I+\mathcal{B}K)^\intercal E_N^\intercal. \label{eq:17b}
	\end{align}
\end{subequations}
Qualitatively speaking, $V$ steers the mean of the system to $\mu_f$, while $K$ steers the covariance to $\Sigma_f$. In order to make the problem convex, we relax the terminal covariance constraint (\ref{eq:17b}) to $\Sigma_f \geq E_N(I+\mathcal{B}K)\Sigma_Y(I+\mathcal{B}K)^\intercal E_N^\intercal$, which can be written as the linear matrix inequality (LMI)
\begin{equation}~\label{eq:18}
	\begin{bmatrix}
		\Sigma_f & E_N(I+\mathcal{B}K)\Sigma_Y^{1/2}\\
		\Sigma_Y^{1/2}(I+\mathcal{B}K)^\intercal E_N^\intercal & I
	\end{bmatrix}
	\geq 0.
\end{equation}

\subsection{Polyhedral Chance Constraints}

When dealing with the risk allocation problem, it is customary to assume that the state constraint set $\mathcal{X}$ is a \textit{convex} polytope $\mathcal{X}^p$, so that
% In Section IV, we provide a novel approach to deal with \textit{quadratic} chance constraints which arise from convex cones and are more natural. 
% Under the assumption of polytopic constraints, however, it is noted \cite{PP} that non-convex constraint sets can also be handled by decomposing $\mathcal{X}^p$ as 
% \begin{equation*}
% \mathcal{X}^p = \mathcal{X}_{\Omega}\setminus\Big(\bigcup_{j=1}^{N_{\text{obs}}}\mathcal{X}_j\Big),
% \end{equation*}
% where $\mathcal{X}_\Omega,\mathcal{X}_1,...,\mathcal{X}_{N_{\text{obs}}}$ are convex polytopes and $N_{\text{obs}}$ is the number of obstacles. Moreover, we assume that the polytope $\mathcal{X}^p$ in our case can be written as a finite intersection of $M$ linear inequalities, that is,
\begin{equation}~\label{eq:19}
	\mathcal{X}^p \coloneqq \bigcap_{j=1}^{M} \{x:\alpha_j^\intercal x \leq \beta_j\},
\end{equation}
where $\alpha_j\in\mathbb{R}^{n}$ and $\beta_j\in\mathbb{R}$. Under this assumption, the probability of violating the state constraints (\ref{eq:10}) can be written as 
\begin{equation}~\label{eq:20}
	\mathbb{P}\bigg(\bigwedge_{k=1}^{N}\bigwedge_{j=1}^{M} \alpha_j^\intercal E_k X > \beta_j\bigg) \leq \Delta.
\end{equation}
Equation (\ref{eq:20}) represents the objective that the joint probability of violating any of the $M$ state constraints over the horizon $N$ is less than or equal to $\Delta$. 
Using Boole's Inequality \cite{Boole,BM}, one can decompose a \textit{joint} chance constraint into \textit{individual} chance constraints as follows
\begin{equation}~\label{eq:22}
	\mathbb{P}\big(\alpha_j^\intercal E_k X \leq \beta_j \big) \geq 1 - \delta_k^j, ~~~ k = 1,\ldots,N,~ j = 1,\ldots,M.\textsl{}
\end{equation}
with
\begin{equation}~\label{eq:21}
	\sum_{k = 1}^{N}\sum_{j=1}^{M}\delta_k^j \leq \Delta,
\end{equation} 
where each $\delta_k^j$ represents the probability of violating the $j$th constraint at time step $k$. 
Notice that the probability in (\ref{eq:22}) is of a random variable with mean $\alpha_j^\intercal E_k\bar{X}$ and covariance $\alpha_j^\intercal E_k\Sigma_XE_k^\intercal \alpha_j$. 
Thus, (\ref{eq:22}) can be equivalently written as 
\begin{equation}~\label{eq:23}
	\Phi\Bigg(\frac{\beta_j-\alpha_j^\intercal E_k\bar{X}}{\sqrt{\alpha_j^\intercal E_k\Sigma_XE_k^\intercal \alpha_j}}\Bigg) \geq 1 - \delta_k^j,
\end{equation}
where $\Phi(\cdot)$ denotes the cumulative distribution function of the standard normal distribution. Simplifying (\ref{eq:23}) and noting that $\Sigma_X = (I+\mathcal{B}K)\Sigma_Y(I+\mathcal{B}K)^\intercal$ yields
\begin{align}~\label{eq:24}
	\alpha_j^\intercal &E_k(\mathcal{A}\mu_0+\mathcal{B}V)\nonumber\\
	&+\|\Sigma_Y^{1/2}(I+\mathcal{B}K)^\intercal E_k^\intercal\alpha_j\|\Phi^{-1}(1-\delta_k^j)\leq \beta_j.
\end{align}
\textit{Remark 3}: Since $\Sigma_0 > 0$, it follows that $\Sigma_Y > 0$, and $\Sigma_Y^{1/2}$ in (\ref{eq:24}) can be computed from its Cholesky decomposition.

The expression in (\ref{eq:24}) gives $NM$ inequality constraints for the optimization problem. In summary, Problem 1 is converted into a convex programming problem.\\[-5pt]

\noindent
\textit{Problem 2}~\label{problem:2}: Given the system (\ref{eq:14}) and (\ref{eq:15}), find the optimal control sequences $V^*$ and $K^*$ that minimize the cost function (\ref{eq:16}) 
subject to the terminal state constraints (\ref{eq:17a}) and (\ref{eq:18}), and the \textit{individual} chance constraints (\ref{eq:24}).\\[-5pt]

\noindent
\textit{Remark 4}: Note that it is not possible to decouple the mean and covariance controllers in the presence of chance constraints, because of (\ref{eq:24}). 
%---------------------------------------------------%
\subsection{Risk Allocation Optimization}

Since $\delta_k^j$ are decision variables in (\ref{eq:24}), the constraints are \textit{bilinear}, which makes it difficult to solve this problem. 
As mentioned previously, in order to transform Problem~2 to a more tractable form, the allocation of the risk levels $\delta_k^j$ may be assumed to be fixed to some pre-specified values, usually uniformly. 
In this case, $\delta_k^j$ are no longer decision variables and the problem can be efficiently solved as an SDP. 
However, a better approach is to allocate $\delta_k^j$ concurrently when solving the optimization Problem~2, so as to minimize the total cost. This gives rise to a natural two-stage optimization framework~\cite{b1}.

Following the approach in \cite{b1}, the upper stage optimization finds the optimal risk allocation $\delta \coloneqq [\delta_1^1,\delta_1^2,\ldots,\delta_N^{M-1},\delta_N^{M}]\in\mathbb{R}^{N\!M}$, and the lower stage solves the CS problem for the optimal controller $U^* = U^*_{N-1}$ given the risk allocation $\delta$ from the upper-stage. 

Let the value of the objective function after the lower-stage optimization for a given risk allocation $\delta$ be $J^*$, that is,
\begin{equation}~\label{eq:25}
	J^*(\delta) = \min_{V,K} J(V,K),
\end{equation}
where $J(V,K)$ is given in (\ref{eq:16}). The upper-stage optimization problem can then be formulated as follows.
\\ \\
\noindent
\textit{Problem 3}~\label{problem:3} (Risk Allocation):
\begin{align}
	&\min_{\delta} J^*(\delta), \label{eq:26}\\
	\text{such~that} & \quad \sum_{k=1}^{N}\sum_{j=1}^{M} \delta_k^j \leq \Delta, \label{eq:27}\\
	&\qquad\delta_k^j > 0,  \label{eq:28}
\end{align}
As shown in \cite{b1}, Problem~3 is a convex optimization problem, given that the objective function $J(V,K)$ is convex, and $\Delta \in (0,0.5]$.
\subsection{Iterative Risk Allocation Motivation}
Even though we have formulated the solution of Problem~2 as a two-stage optimization problem, it is not clear yet how to solve Problem 3 efficiently in order to determine the optimal risk allocation. 
To gain insight into the solution, we first state a theorem from \cite{b1} about the monotonicity of $J^*(\delta)$. 

\begin{theorem} \label{theo:1}
	The optimal cost from solving Problem~2 is a monotonically \textit{decreasing} function in $\delta_k^j$, that is,
	\begin{equation}~\label{eq:29}
		\frac{\partial J^*}{\partial\delta_k^j} \leq 0,\quad  k=1,\ldots,N,~j=1,\ldots,M.
	\end{equation}
\end{theorem}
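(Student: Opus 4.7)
The plan is to prove monotonicity by a feasible-set inclusion argument rather than by any explicit derivative calculation. Observe that, among the constraints that define the lower-stage feasible set for $(V,K)$, namely the mean terminal equality (\ref{eq:17a}), the covariance LMI (\ref{eq:18}), and the $NM$ individual chance constraints (\ref{eq:24}), the risk parameter $\delta_k^j$ enters only through the single constraint (\ref{eq:24}) indexed by that particular $(k,j)$. Likewise, the objective (\ref{eq:16}) does not depend on $\delta$. Hence, to establish (\ref{eq:29}), it suffices to show that the feasible region is (set-theoretically) nondecreasing as $\delta_k^j$ increases.

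First I would fix all other risk levels and consider two values $\delta_k^j \le \tilde\delta_k^j$ (both in the admissible range dictated by $\Delta \in (0, 0.5]$, so that $1-\delta_k^j, 1-\tilde\delta_k^j \in [0.5,1)$). Since $\Phi$ is strictly increasing, $\Phi^{-1}$ is strictly increasing, so
\begin{equation*}
\Phi^{-1}(1-\tilde\delta_k^j) \le \Phi^{-1}(1-\delta_k^j),
\end{equation*}
and both sides are nonnegative because their arguments are at least $1/2$. The coefficient $\|\Sigma_Y^{1/2}(I+\mathcal{B}K)^\intercal E_k^\intercal \alpha_j\|$ multiplying this quantile in (\ref{eq:24}) is itself a norm, hence nonnegative. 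Therefore the left-hand side of (\ref{eq:24}) evaluated at $\tilde\delta_k^j$ is no larger than its value at $\delta_k^j$, so every $(V,K)$ feasible for $\delta_k^j$ remains feasible for $\tilde\delta_k^j$. Combined with the observation that no other constraint depends on $\delta_k^j$, this yields the inclusion $\mathcal{F}(\delta) \subseteq \mathcal{F}(\tilde\delta)$ of the two lower-stage feasible sets.

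Finally, because the lower-stage objective (\ref{eq:16}) does not depend on $\delta$, minimizing the same function over a larger set can only decrease the optimum, giving $J^*(\tilde\delta)\le J^*(\delta)$ whenever $\tilde\delta_k^j \ge \delta_k^j$ with all other components held equal. This is precisely the monotonicity statement, and under the standard smoothness/sensitivity conditions on the SDP it implies the partial-derivative inequality $\partial J^*/\partial \delta_k^j \le 0$ componentwise.

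I do not anticipate a serious technical obstacle: the nontrivial sign check is simply confirming $\Phi^{-1}(1-\delta_k^j) \ge 0$, which is guaranteed by the assumption $\Delta \in (0,0.5]$ propagating to each individual allocation, together with the nonnegativity of the norm coefficient. The only subtlety worth flagging is that (\ref{eq:29}) is stated as a derivative, so a brief remark is warranted that $J^*$ inherits sufficient regularity from the parametric SDP to pass from the monotone set-inclusion argument to the sign of the partial derivative; alternatively, one may read (\ref{eq:29}) weakly as the monotone nonincreasingness established above.
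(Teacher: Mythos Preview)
Your argument is correct. The paper does not supply its own proof of Theorem~\ref{theo:1}; it simply cites~\cite{b1}, and the feasible-set inclusion argument you give is precisely the standard reasoning underlying that result. One minor sharpening: the nonnegativity of $\Phi^{-1}(1-\delta_k^j)$ is not actually needed for the inclusion $\mathcal{F}(\delta)\subseteq\mathcal{F}(\tilde\delta)$, since that follows already from the monotonicity of $\Phi^{-1}$ together with the nonnegativity of the norm factor in (\ref{eq:24}); where the sign condition does matter is in keeping (\ref{eq:24}) a second-order cone constraint, so that Problem~2 remains a convex program and $J^*(\delta)$ is well defined as a minimum.
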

\noindent

\begin{proof}
	See \cite{b1}.
\end{proof}

In the following section, we use this theorem to create an iterative algorithm that solves Problem~3 by lowering the cost at each iteration. 
The main idea is that, by carefully increasing the risk allocations $\delta_{k}^{j}$, Theorem 1 guarantees that the optimal cost will be reduced with each successive iteration.
As such, the algorithm lowers the risk, equivalently tightens the constraints, for the constraints that are too conservative, and increases the risk, equivalently loosens the constraints, for the constraints that are already active.
The following remark introduces this idea and defines active and inactive constraints in the context of risk allocation.

\noindent
\\
\textit{Remark 5}: The chance constraints can be written in yet another form that will prove useful below. 
Starting from (\ref{eq:24}), notice that we can write the chance constraints as 
\begin{equation}~\label{eq:32}
	\delta_k^j \geq 1 - \Phi\Bigg(\frac{\beta_j-\alpha_j^\intercal E_k\bar{X}^*}{\|\Sigma_Y^{1/2}(I+\mathcal{B}K^*)^\intercal E_k^\intercal\alpha_j\|}\Bigg) \eqqcolon \bar{\delta}_k^j.
\end{equation}
The quantity $\bar{\delta}_k^j$ represents the \textit{true} risk experienced by the optimal trajectories, i.e, when using $(V^*,K^*)$. 
Clearly, the risk we have selected does not need to be equal to the actual risk once the optimization is completed. 
When these values are equal we will say that the constraint (\ref{eq:32}) is active, and is inactive otherwise. 
Good solutions  correspond to cases when the true risk is within a small margin of the allocated risk.
Many $\delta_k^j$ having values smaller than their true counterparts would imply an overly conservative solution.

\subsection{Iterative Risk Allocation Algorithm}

We can exploit Theorem \ref{theo:1} in the context of CS to create an iterative risk allocation algorithm that simultaneously finds the optimal risk allocation
$\delta^*$ and the optimal control pair $(V^*,K^*)$. 
To this end, suppose we start with some feasible risk allocation $\delta_{k(i)}^j$, for all $k,j$, where $i$ denotes the iteration number. 
Using this risk allocation, we then solve Problem~2 to get the optimal controller $(V_{(i)}^*,K_{(i)}^*)$, which corresponds to the optimal mean trajectory $\bar{X}_{(i)}^*$ at iteration $i$.
Next, we construct a new risk allocation $\delta_{k(i)}^{j\prime}$ as follows: for all $k,j$ such that $\delta_{k(i)}^j$ is active, we keep the corresponding allocation the same, i.e, $\delta_{k(i)}^{j\prime} = \delta_{k(i)}^j$. 
However, for all $k,j$ such that $\delta_{k(i)}^j$ is inactive we let $\delta_{k(i)}^{j\prime} < \delta_{k(i)}^j$.
This corresponds to tightening the constraints. 
Since this new risk allocation is smaller, and since $\Phi^{-1}(z)$ is a monotonically increasing function, it follows that $\Phi^{-1}(1-\delta_{k(i)}^{j\prime}) > \Phi^{-1}(1-\delta_{k(i)}^j)$. 
Furthermore, this implies that 
\begin{align}~\label{eq:33}
	\alpha_j^\intercal E_k\bar{X}_{(i)}^* &< \beta_j - \|\Sigma_Y^{1/2}(I+\mathcal{B}K_{(i)}^*)^\intercal E_k^\intercal\alpha_j\|\Phi^{-1}(1-\delta_{k(i)}^{j\prime})\nonumber\\
	&< \beta_j - \|\Sigma_Y^{1/2}(I+\mathcal{B}K_{(i)}^*)^\intercal E_k^\intercal\alpha_j\|\Phi^{-1}(1-\delta_{k(i)}^{j}).
\end{align}
Constraint (\ref{eq:33}) ensures that the optimal solution for $\delta_{(i)}$ is feasible for $\delta_{(i)}^\prime$. 
Furthermore, since $\delta_{k(i)}^{j\prime} < \delta_{k(i)}^{j}$, it follows that $\mathcal{R}(\delta^\prime) \subseteq \mathcal{R}(\delta)$, so the optimal solution for $\delta_{(i)}$ is also the optimal solution for $\delta_{(i)}^\prime$ as well, hence $J^*(\delta^\prime) = J^*(\delta)$.

Next, we construct a new risk allocation $\delta_{k(i+1)}^j$ from $\delta_{k(i)}^{j\prime}$ as follows. 
For all $k,j$ such that $\delta_{k(i)}^{j\prime}$ is inactive, leave the new risk allocation the same. 
For all $k,j$ such that $\delta_{k(i)}^{j\prime}$ is active, let $\delta_{k(i+1)}^j > \delta_{k(i)}^{j\prime}$, which corresponds to relaxing the constraints. 
Following the same logic, 
Theorem \ref{theo:1} implies that $J^*(\delta_{(i)}^\prime) \geq J^*(\delta_{(i+1)})$. Thus, we have laid out an iterative scheme for a sequence of risk allocations $\{\delta_{(0)},\delta_{(1)},\ldots,\delta_{(i)}\}$ that continually lowers the optimal cost. 

This leads to Algorithm~\ref{algo:1} that solves the optimal risk allocation for the CS problem subject to chance constraints.  
Note that the algorithm is initialized with a constant risk allocation. To tighten the inactive constraints in Line 9, the corresponding risk is scaled by a parameter  $0<\rho<1$ that weighs the current risk with the \textit{true} risk from that solution. 
Additionally, to loosen the active constraints in Line 13, the corresponding risk is increased proportionally to the residual risk remaining. 

\begin{algorithm}[]~\label{algo:1}
	\KwIn{$\delta_k^j\gets\Delta/(NM),\epsilon,\rho$}
	\KwOut{$\delta^*,J^*,V^*,K^*$}
	\While{$|J^*-J^*_{\rm prev}|>\epsilon$}{
		$J^*_{\rm prev}\gets J^*$\\
		Solve Problem 2 with current $\delta$ to obtain $\bar{\delta}$\\
		$\hat{N}\gets$ number of indices where constraint is active\\
		\If{$\hat{N} = 0$ \normalfont{or} $\hat{N} = MN$}{
			\textbf{break}\;
		}
		\ForEach{$(k,j)$ \normalfont{such that} $j$\normalfont{th constraint at} $k$\normalfont{th time step is \textit{inactive}}}{
			$\delta_k^j \gets \rho \delta_k^j + (1-\rho)\bar{\delta}_k^j$
		}
		$\delta_{\text{res}}\gets\Delta - \sum_{k=1}^{N}\sum_{j=1}^{M} \delta_k^j$\\
		\ForEach{$(k,j)$ \normalfont{such that} $j$\normalfont{th constraint at} $k$\normalfont{th time step is \textit{active}}}{
			$\delta_k^j \gets \delta_k^j + \delta_{\text{res}}/\hat{N}$
		}
	}
	\caption{Iterative Risk Allocation CS}
\end{algorithm}

% % % % % % % % % % % % % % % % % % % % % % % % % 

\section{Cone Chance Constraints}~\label{sec:quad}

In many engineering applications polytopic constraints such as (\ref{eq:19}) are not realistic. 
Most often, the constraints have the form of a convex cone, namely, the feasible region is characterized by
\begin{equation}~\label{eq:34}
	\mathcal{X}^c := \{x\in\mathbb{R}^n : \|Ax+b\|_2 \leq c^\intercal x + d\}.
\end{equation}
Cone constraints such as (\ref{eq:34}) are more realistic, as they better describe the feasible space.  
As with the case of a polyhedral feasible state space $\mathcal{X}^p$, we want the state to be inside 
$\mathcal{X}^c$ throughout the whole time horizon. 
However, since the dynamics are stochastic, this assumption is relaxed to the condition that the 
probability that the state is not inside this set is less than or equal to $\Delta$.
In the context of convex cone state constraints, this condition becomes
\begin{subequations}~\label{eq:35}
	\begin{align}
		\!\mathbb{P}(\|Ax_k + b\|_2 \leq c^\intercal &x_k + d) \geq 1 - \delta_k, ~~ k=1,\ldots,N, \label{eq:35a}\\
		\sum_{k=1}^{N}\delta_k &\leq \Delta. \label{eq:35b}
	\end{align}
\end{subequations}

\noindent
\textit{Remark 6}: Although the set $\mathcal{X}^c$ is convex, the chance constraint
$\mathbb{P}(x \in \mathcal{X}^c) \ge 1 - \delta$ may not be convex. 
Specifically, for large $\delta_k$, it is possible that the chance constraint (\ref{eq:35a}) is non-convex~\cite{TwoSide}. 

Since there is no guarantee that (\ref{eq:35}) will be a convex constraint, we need to make a convex approximation so that (\ref{eq:35}) holds for all $\Delta \in (0,0.5]$. 

\subsection{Two-Sided Approximation of Cone Constraints}~\label{TS}
\begin{comment}
Recent work on two-sided affine chance constraints~\cite{TwoSide} has shown how to relax a general class of \textit{quadratic} constraints of the form
\begin{equation}~\label{eq:36}
\mathbb{P}((\hat{a}^\intercal \xi + \hat{b})^2 + (\hat{c}^\intercal \xi + \hat{d})^2 \leq \kappa) \geq 1 - \epsilon,
\end{equation}
where $\hat{a}, \hat{c}$ are real vectors of appropriate dimension, $\hat{b},\hat{d} \in \Re$, $\kappa > 0$,
where $\xi$ is a Gaussian random variable, and $\epsilon\in (0,0.5]$.
In~\cite{TwoSide} the authors proved that (\ref{eq:36}) can be conservatively approximated by the following convex constraints
\begin{subequations}~\label{eq:37}
\begin{align}
\mathbb{P}(|\hat{a}^\intercal \xi + \hat{b}| \leq f_1) &\geq 1 - \beta\epsilon, \label{eq:37a}\\
\mathbb{P}(|\hat{c}^\intercal \xi + \hat{d}| \leq f_2) &\geq 1 - (1 - \beta)\epsilon, \label{eq:37b}\\
f_1^2 + f_2^2 \leq \kappa,\qquad & \label{eq:37c}
\end{align}
\end{subequations}
where $\beta\in(0,1)$ represents a constant that balances the trade-off between violating any of the two constraints (\ref{eq:37a})-(\ref{eq:37b}).
\end{comment}
In order to approximate the cone chance constraint (\ref{eq:35a}), we first replace the cone constraint in (\ref{eq:35a})
with the quadratic chance constraint
%to $c^\intercal \bar{x}_k + d$ on the right hand side because the term $\kappa$ in (\ref{eq:36}) is a constant, i.e., the chance constraints become
\begin{equation} \label{eq:relaxed_cone_constraints}
	\mathbb{P}(\|Ax_k + b\|_2 \leq c^\intercal \bar{x}_k + d) \geq 1 - \delta_k, ~~ k = 1,\ldots,N.
\end{equation}

\textit{Remark 7}: The chance constraint (\ref{eq:relaxed_cone_constraints})  is a relaxation of the original chance constraint (\ref{eq:35a}). 
The proof of this result is given in Appendix~A.

Note that (\ref{eq:relaxed_cone_constraints}) can be equivalently written as
\begin{equation}~\label{eq:relaxed_cone_constraints_2}
	\mathbb{P}\left[\left( \sum_{i=1}^{n} (a_i^\intercal x_k + b_i)^2 \right)^{1/2} \leq \kappa_k \right] \geq 1 - \delta_k, ~~ k = 1,\ldots,N,
\end{equation}
where $\kappa_k := c^\intercal \bar{x}_k + d$ and $a_i^\intercal$ denotes the $i$th row of $A$.
Squaring both sides of (\ref{eq:relaxed_cone_constraints_2}) and letting $\psi_{i,k} := a_i^\intercal x_k + b_i$ yields 
\begin{equation}\label{eq:relaxed_cone_constraints_3}
	\mathbb{P}\left(\sum_{i=1}^{n}\psi_{i,k}^2 \leq \kappa_k^2\right) \geq 1 - \delta_k, ~~ k = 1,\ldots, N.
\end{equation}
The following proposition enables the conversion of a quadratic constraint of the form (\ref{eq:relaxed_cone_constraints_3})  to a collection of two-sided (absolute value) constraints.
To simplify the notation, below we drop the subscript $k$ from the corresponding expressions.

\begin{prop}  \label{Proposition1} 
	The quadratic constraint
	\begin{equation}\label{eq:relaxed_cone_constraints_30}
		\mathbb{P}\left(\sum_{i=1}^{n}\psi_{i}^2 \leq \kappa^2\right) \geq 1 - \delta,
	\end{equation}
	is satisfied if  the following constraints are satisfied
	\begin{subequations}~\label{eq:two_sided_constraints}
		\begin{align}
			&\hspace*{-0.2cm}\mathbb{P}(|\psi_{i}| \leq f_{i}) \geq 1 - \beta_i \delta, \quad i = 1,\ldots,n,  \label{eq:two_sided_constraints_1} \\
			&\sum_{i=1}^{n} f_{i}^2 \leq \kappa^2,  \label{eq:two_sided_constraints_2} \\
			&\sum_{i=1}^{n} \beta_i = 1, \label{eq:two_sided_constraints_3}
		\end{align}
	\end{subequations}
	for some non-negative $f_1,f_2,\ldots,f_n$ and $\beta_1,\beta_2,\ldots,\beta_n$.
\end{prop}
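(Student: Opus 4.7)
The plan is to prove Proposition~\ref{Proposition1} by a straightforward union-bound (Boole's inequality) argument, combined with a deterministic pointwise implication.

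First, I would define, for each $i = 1,\ldots,n$, the event $\mathcal{E}_i := \{|\psi_i| \leq f_i\}$. By hypothesis (\ref{eq:two_sided_constraints_1}), each complementary event satisfies $\mathbb{P}(\mathcal{E}_i^c) \leq \beta_i \delta$. Applying Boole's inequality to $\bigcup_{i=1}^n \mathcal{E}_i^c$ and then using (\ref{eq:two_sided_constraints_3}) gives
\begin{equation*}
\mathbb{P}\!\left(\bigcup_{i=1}^n \mathcal{E}_i^c\right) \leq \sum_{i=1}^n \beta_i \delta = \delta,
\end{equation*}
so that $\mathbb{P}\!\left(\bigcap_{i=1}^n \mathcal{E}_i\right) \geq 1 - \delta$.

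Next I would argue pointwise: on the event $\bigcap_{i=1}^n \mathcal{E}_i$, the sample value of each $\psi_i$ satisfies $\psi_i^2 \leq f_i^2$. Summing over $i$ and invoking the deterministic inequality (\ref{eq:two_sided_constraints_2}) yields
\begin{equation*}
\sum_{i=1}^n \psi_i^2 \;\leq\; \sum_{i=1}^n f_i^2 \;\leq\; \kappa^2.
\end{equation*}
This shows the set inclusion $\bigcap_{i=1}^n \mathcal{E}_i \subseteq \{\sum_{i=1}^n \psi_i^2 \leq \kappa^2\}$, which together with the previous bound immediately gives (\ref{eq:relaxed_cone_constraints_30}).

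There is no real obstacle here: the result is essentially a conservative convex inner approximation, and the proof is just Boole's inequality plus a monotonicity observation. The only subtle choice is making sure the nonnegativity of the $f_i$'s and $\beta_i$'s is used correctly, and that the union bound is applied to the complements of the $\mathcal{E}_i$'s so that the direction of the inequality is preserved. Note also that the argument does not use Gaussianity of $\psi_i$ anywhere, so the proposition holds in full generality; Gaussianity is only relevant downstream when one wants to rewrite the individual constraints (\ref{eq:two_sided_constraints_1}) in closed form using $\Phi^{-1}$, as was done for the polyhedral case in (\ref{eq:24}).
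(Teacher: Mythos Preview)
Your proof is correct and essentially identical to the paper's: the paper first uses the pointwise implication $\{\bigwedge_i |\psi_i|\le f_i\}\subseteq\{\sum_i\psi_i^2\le\kappa^2\}$ and then applies the ``reverse union bound'' lemma from Appendix~B, which is exactly Boole's inequality on the complements $\mathcal{E}_i^c$ as you wrote it. Your remark that Gaussianity is not used is also accurate.
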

\begin{proof}
	Since $|\psi_i| \le f_i$ for all $i=1,\ldots,n$ implies that 
	$\sum_{i=1}^{n} \psi_{i}^2 \leq \sum_{i=1}^{n} f_{i}^2 \leq \kappa^2$, it follows that
	\begin{equation}~\label{eq:two_sided_proof_1}
		\mathbb{P}\left(\sum_{i=1}^{n} \psi_{i}^2 \leq \kappa^2 \right) \geq \mathbb{P}
		\left(\bigwedge_{i=1}^{n} | \psi_{i} |  \leq f_{i} \right). 
	\end{equation}
	From the reverse union bound (see Appendix~B), we have 
	\begin{subequations}
		\begin{align}
			\mathbb{P}\left(\bigwedge_{i=1}^{n} |\psi_{i} | \leq f_{i}\right) &\geq \sum_{i=1}^{n} \mathbb{P}( |\psi_{i}| \leq f_i ) - (n-1) \\
			&\geq \sum_{i=1}^{n} (1 - \beta_i \delta) - (n - 1) \\
			&= 1 - \delta \sum_{i=1}^{n} \beta_i = 1 - \delta. \label{eq:two_sided_proof_2}
		\end{align}
	\end{subequations}
	Lastly, combining (\ref{eq:two_sided_proof_1}) and (\ref{eq:two_sided_proof_2}) gives (\ref{eq:relaxed_cone_constraints_3}), which concludes the proof.
\end{proof}

We now present two methods for approximating the two-sided chance constraints (\ref{eq:two_sided_constraints_1}), one that utilizes a result from \cite{TwoSide}, and one that utilizes the reverse union bound. 

\subsubsection{Three-cut Outer Approximation}

We state, without proof, the following lemma from \cite{TwoSide}. 

\begin{lem} \label{lemma1} 
	Let $\xi\sim\mathcal{N}(\mu,\Sigma)$ be a jointly distributed Gaussian random vector with mean $\mu$ and positive definite covariance matrix $\Sigma$, and let $\delta\in (0,0.5]$.
	Let $LL^\intercal = \Sigma$ be the Cholesky decomposition of $\Sigma$. 
	Let $a,b\in\mathbb{R}$ and $\eta \in\mathbb{R}^{n}$ be decision variables.
	Then, 
	\begin{subequations}~\label{eq:three_cut_approximation_general}
		\begin{align}
			t &\geq \|L^\intercal \delta \|_2, \\
			a - \mu^\intercal \eta &\leq \Phi^{-1}(\delta) t, \\
			b - \mu^\intercal \eta  &\geq \Phi^{-1}(1-\delta) t, \\
			a - b &\leq 2\Phi^{-1}(\delta/2)t,
		\end{align}
	\end{subequations}
	is a second order cone (SOC) outer approximation of the constraint
	\begin{equation}
		\mathbb{P}(a \leq \eta^\intercal \xi \leq b) \geq 1 - \delta, \label{eq:two_sided_xi}
	\end{equation}
	which, in fact, guarantees 
	\begin{equation}
		\mathbb{P}(a \leq \eta^\intercal \xi \leq b) \geq 1 - 1.25\delta.
	\end{equation}
\end{lem}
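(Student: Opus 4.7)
The plan is to reduce the lemma to a one-dimensional scalar problem and then analyze the worst case. Since $\eta^\intercal\xi \sim \mathcal{N}(\eta^\intercal\mu, \sigma^2)$ with $\sigma = \|L^\intercal\eta\|_2$, the first SOC constraint simply enforces $t \geq \sigma$. Introducing the normalized quantities $\hat{u} = (a-\eta^\intercal\mu)/t$ and $\hat{v} = (b-\eta^\intercal\mu)/t$, the remaining constraints read
\[
\hat{u} \leq \Phi^{-1}(\delta), \qquad \hat{v} \geq \Phi^{-1}(1-\delta), \qquad \hat{v} - \hat{u} \geq 2\Phi^{-1}(1-\delta/2).
\]
In the true standardization, $Z = (\eta^\intercal\xi - \eta^\intercal\mu)/\sigma$ is $\mathcal{N}(0,1)$ and the relevant interval becomes $[\hat{u}\,t/\sigma,\,\hat{v}\,t/\sigma]$. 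Since $\hat{u} \leq 0 \leq \hat{v}$ and $t/\sigma \geq 1$, this interval only grows under the scaling, so $\mathbb{P}(a \leq \eta^\intercal\xi \leq b) \geq \mathbb{P}(\hat{u} \leq Z \leq \hat{v})$. Thus it suffices to prove, for all $(\hat{u},\hat{v})$ in the feasible set above, that $\Phi(\hat{v}) - \Phi(\hat{u}) \geq 1 - 1.25\delta$.

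Next I would minimize $F(\hat{u},\hat{v}) := \Phi(\hat{v}) - \Phi(\hat{u})$ over this polyhedron. Its gradient $(-\phi(\hat{u}), \phi(\hat{v}))$ is nowhere zero, so the infimum lies on the boundary. A direct check shows that the ``marginal'' corner $\hat{u} = \Phi^{-1}(\delta),\ \hat{v} = \Phi^{-1}(1-\delta)$ violates the separation constraint $\hat{v} - \hat{u} \geq 2\Phi^{-1}(1-\delta/2)$ for every $\delta > 0$, so that constraint is strictly active at any minimizer. Parameterizing the active edge as $\hat{v} = \hat{u} + c$ with $c := 2\Phi^{-1}(1-\delta/2)$ and $g(\hat{u}) := \Phi(\hat{u} + c) - \Phi(\hat{u})$, the derivative $g'(\hat{u}) = \phi(\hat{u}+c) - \phi(\hat{u})$ vanishes only at the symmetric point $\hat{u} = -c/2$, where $g''(-c/2) = -c\,\phi(c/2) < 0$ identifies the stationary point as a local \emph{maximum}. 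The minimum must therefore occur at an edge endpoint, i.e., at $\hat{u} = \Phi^{-1}(\delta)$ (third and first constraints active) or, symmetrically, at $\hat{v} = \Phi^{-1}(1-\delta)$; both endpoints yield the same value
\[
F^{\star} \;=\; \Phi\bigl(2\Phi^{-1}(1-\delta/2) - \Phi^{-1}(1-\delta)\bigr) - \delta.
\]

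The last step, and the main obstacle, is to verify the scalar inequality
\[
\Phi\bigl(2\Phi^{-1}(1-\delta/2) - \Phi^{-1}(1-\delta)\bigr) \;\geq\; 1 - \tfrac{\delta}{4}, \qquad \delta \in (0, 0.5],
\]
which is equivalent to $F^{\star} \geq 1 - 1.25\delta$. This is a purely transcendental inequality in $\Phi$ and its inverse with no evident algebraic rearrangement, and the constant $1.25$ appears empirically tight as $\delta \downarrow 0$. My approach would be to (i) use the asymptotics $\Phi^{-1}(1-\delta) \sim \sqrt{-2\ln\delta}$ to establish the bound in a right-neighborhood of $0$ via a Taylor expansion of the slack $F^{\star} - (1 - 1.25\delta)$, and (ii) extend the bound to the complementary compact interval either by monotonicity of the slack or, failing that, by rigorous interval enclosures on a fine mesh. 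A cleaner route worth attempting first is to rewrite the inequality as $g(\delta/2) \geq \tfrac{1}{2}[g(\delta) + g(\delta/4)]$ with $g := \Phi^{-1}\circ(1 - \cdot)$, which is a midpoint-concavity statement for $g \circ \exp$ at scale $\ln 2$; if this concavity holds throughout $(0, 0.5]$ it delivers the bound in one line.
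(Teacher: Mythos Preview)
The paper does not prove this lemma; it is quoted without proof and attributed to \cite{TwoSide}. Your argument therefore cannot be compared to any proof in the paper, but it can be assessed on its own merits.

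Your reduction to the scalar standardized problem and the subsequent minimization of $\Phi(\hat v)-\Phi(\hat u)$ over the polyhedron are both correct, and you arrive at the right worst-case value $F^\star$. Two remarks.

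First, you treat only one half of the lemma, namely that the SOC system implies $\mathbb{P}(a\le\eta^\intercal\xi\le b)\ge 1-1.25\delta$. The lemma also asserts that the system is an \emph{outer} approximation, i.e., every $(a,b,\eta)$ satisfying \eqref{eq:two_sided_xi} lies in the SOC set for some $t$. This direction is easy but should be recorded: take $t=\sigma$; then $\Phi(\hat v)-\Phi(\hat u)\ge 1-\delta$ forces $\Phi(\hat u)\le\delta$ and $\Phi(\hat v)\ge 1-\delta$, while the third cut follows because, among intervals carrying mass at least $1-\delta$, the symmetric one about the origin has the smallest length $2\Phi^{-1}(1-\delta/2)$.

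Second, your concavity route for the residual scalar inequality in fact goes through cleanly, so the asymptotic-plus-interval-arithmetic fallback is unnecessary. With $h(t)=\Phi^{-1}(1-e^t)$ one has $h'(t)=-1/r(h(t))$, where $r(z)=\phi(z)/\bar{\Phi}(z)$ is the Gaussian hazard rate, and hence $h''(t)=-r'(h)/r(h)^3$. Since $r'(z)\ge 0$ for all $z$ (equivalently $\phi(z)\ge z\,\bar{\Phi}(z)$, which is trivial for $z\le 0$ and is the standard Mills-ratio bound for $z>0$), $h$ is concave on $(-\infty,0)$. Concavity of $h$ is exactly the midpoint inequality $g(\delta/2)\ge\tfrac12[g(\delta)+g(\delta/4)]$ that you identified, and this yields $F^\star\ge 1-1.25\delta$ for all $\delta\in(0,0.5]$.
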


%In order to use Lemma 1, we need to put the chance constraints (\ref{eq:two_sided_constraints_1}) into the form (\ref{eq:two_sided_xi}). 
%To do so, plug in the mean and deviation dynamics (\ref{eq:14})-(\ref{eq:15}) into $\psi_{i,k}$, which after rearranging terms, yields 
%\begin{equation}
%    \mathbb{P}(\hat{a} \leq \hat{x}^\intercal Y \leq \hat{b}) \geq 1 - \hat{\epsilon},
%\end{equation}
%where 
%\begin{subequations}
%    \begin{align}
%        \hat{a} &= -[f_{i} + b_{i} + a_{i}^\intercal E_k(\mathcal{A} \mu_0 + \mathcal{B} V)], \\
%        \hat{b} &= f_i - b_i - a_i^\intercal E_k (\mathcal{A} \mu_0 + \mathcal{B} V), \\
%        \hat{x} &= (I + \mathcal{B} K)^\intercal E_k^\intercal a_i, \\
%        \hat{\epsilon} &= \beta_i \delta_k. 
%    \end{align}
%\end{subequations}
%Note that $Y \sim \mathcal{N}(0,\Sigma_Y)$, where from before $\Sigma_Y = \mathcal{A}\Sigma_0\mathcal{A}^\intercal + \mathcal{D}\mathcal{D}^\intercal$. 
Using Lemma~\ref{lemma1} and Proposition~\ref{Proposition1}, it follows immediately that 
the constraint (\ref{eq:relaxed_cone_constraints}) is satisfied if the following 
convex constraints are satisfied, for all $i = 1,\ldots,n$ and $k=1,\ldots, N$
\begin{subequations}~\label{eq:three-cut-approx}
	\begin{align}
		t_{i,k} \geq \|\Sigma_{Y}^{1/2}(I + \mathcal{B} &K)^\intercal E_k^\intercal a_i \|_2, \\
		-[f_{i,k} + b_i + a_i^\intercal E_k(\mathcal{A}\mu_0 + \mathcal{B} V)] &\leq \Phi^{-1}(\beta_i\delta_k) t_{i,k}, \\
		f_{i,k} - b_i - a_i^\intercal  E_k(\mathcal{A}\mu_0 + \mathcal{B} V) &\geq \Phi^{-1}(1-\beta_i\delta_k) t_{i,k}, \\
		-f_{i,k} &\leq \Phi^{-1}(\beta_i\delta_k/2)t_{i,k},
	\end{align}
\end{subequations}
for the decision variables $V,K, \mathbf{t} := [\mathbf{t}_1,\ldots,\mathbf{t}_N]^\intercal$, and  $\mathbf{f} := [\mathbf{f}_1,\ldots,\mathbf{f}_N]$, where $\mathbf{t}_k := [t_{1,k},\ldots,t_{n,k}]$ and $\mathbf{f}_k := [f_{1,k},\ldots,f_{n,k}]$.

% % % % % % % % % % % % % % % % % % % % % % % % % % % % % % % % %

\subsubsection{Reverse Union Bound Approximation}

The following proposition gives an alternate approximation of the two-sided chance constraint (\ref{eq:two_sided_constraints_1}) using the cumulative distribution function of the normal distribution.
\begin{prop}
	Let $\epsilon^1_{i,k}, \epsilon^2_{i,k} > 0$ for all $i = 1,\ldots,n$ and $k = 1,\ldots,N$.
	Assume that 
	the convex SOC constraints
	\begin{subequations}~\label{eq:reverse_union_bound_constraints}
		\begin{align}
			&a_i^\intercal E_k(\mathcal{A} \mu_0 + \mathcal{B} V) + \|\Sigma_Y^{1/2}(I + \mathcal{B} K)^\intercal E_k^\intercal a_i\|_2 \Phi^{-1}(\epsilon^1_{i,k}) \nonumber \\
			&\hspace*{6cm} \leq f_{i,k} - b_i, \\
			&-a_i^\intercal E_k (\mathcal{A} \mu_0 + \mathcal{B} V) + \|\Sigma_{Y}^{1/2}(I + \mathcal{B} K)^\intercal E_k^\intercal a_i\|_2 \Phi^{-1}(\epsilon^2_{i,k})  \nonumber \\
			&\hspace*{6cm} \leq f_{i,k} + b_i, 
		\end{align}
	\end{subequations}
	are satisfied for some $\epsilon^1_{i,k} + \epsilon^2_{i,k} \geq 2 - \beta_i\delta_k$, $V$ and $K$.
	Then, the  chance constraints (\ref{eq:two_sided_constraints_1}) are satisfied as well.
\end{prop}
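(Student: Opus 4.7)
The plan is to decompose the two-sided absolute value constraint $|\psi_{i,k}| \leq f_{i,k}$, where $\psi_{i,k} := a_i^\intercal x_k + b_i$, into two one-sided chance constraints, recognize each SOC inequality in (\ref{eq:reverse_union_bound_constraints}) as the deterministic reformulation of one of those one-sided constraints, and finally combine them using the reverse union (Bonferroni-type) bound from Appendix~B, already invoked in the proof of Proposition~\ref{Proposition1}. This is essentially the same structural idea as the derivation of equation~(\ref{eq:24}) from (\ref{eq:23}), applied twice, with the mild twist that the right-hand side offset involves $\pm b_i$.

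More concretely, I would first observe that, under the control parameterization (\ref{eq:13})--(\ref{eq:15}), the random variable $\psi_{i,k}$ is Gaussian with mean $a_i^\intercal E_k (\mathcal{A}\mu_0 + \mathcal{B}V) + b_i$ and variance $\|\Sigma_Y^{1/2}(I + \mathcal{B}K)^\intercal E_k^\intercal a_i\|_2^2$. Following exactly the argument that produced (\ref{eq:24}) from (\ref{eq:23}), the first SOC inequality of (\ref{eq:reverse_union_bound_constraints}), after moving $b_i$ to the left, is equivalent to
\begin{equation*}
\mathbb{P}(\psi_{i,k} \leq f_{i,k}) \geq \epsilon^1_{i,k}.
\end{equation*}
For the second SOC inequality I would apply the same reformulation to the Gaussian random variable $-\psi_{i,k}$, whose mean flips sign while its variance is preserved; after bringing $b_i$ to the left-hand side one obtains
\begin{equation*}
\mathbb{P}(-\psi_{i,k} \leq f_{i,k}) \geq \epsilon^2_{i,k},
\end{equation*}
i.e. $\mathbb{P}(\psi_{i,k} \geq -f_{i,k}) \geq \epsilon^2_{i,k}$.

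To close the argument, I would invoke the reverse union bound (Appendix~B) on the two events $A_1 := \{\psi_{i,k}\le f_{i,k}\}$ and $A_2 := \{\psi_{i,k}\ge -f_{i,k}\}$, whose intersection is exactly $\{|\psi_{i,k}|\le f_{i,k}\}$:
\begin{equation*}
\mathbb{P}(|\psi_{i,k}| \leq f_{i,k}) \geq \mathbb{P}(A_1) + \mathbb{P}(A_2) - 1 \geq \epsilon^1_{i,k} + \epsilon^2_{i,k} - 1.
\end{equation*}
The hypothesis $\epsilon^1_{i,k} + \epsilon^2_{i,k} \geq 2 - \beta_i \delta_k$ then yields $\mathbb{P}(|\psi_{i,k}| \leq f_{i,k}) \geq 1 - \beta_i \delta_k$, which is precisely (\ref{eq:two_sided_constraints_1}).

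The only delicate point I anticipate is the sign bookkeeping when translating between the chance-constraint form (which uses $\Phi^{-1}(1-\cdot)$ in (\ref{eq:24})) and the SOC form in (\ref{eq:reverse_union_bound_constraints}) (which uses $\Phi^{-1}(\epsilon_{i,k}^{1,2})$): one must be careful that each SOC inequality corresponds to a lower bound on a probability, rather than an upper bound, and that the $\pm b_i$ terms are absorbed correctly. Everything else is routine; no additional convexity or monotonicity argument is needed because Bonferroni provides exactly the additive offset by $-1$ that matches the prescribed budget $2-\beta_i\delta_k$.
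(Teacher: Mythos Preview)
Your proposal is correct and follows essentially the same approach as the paper: identify each SOC inequality in (\ref{eq:reverse_union_bound_constraints}) as the deterministic reformulation of a one-sided Gaussian chance constraint on $\psi_{i,k}$ (respectively $-\psi_{i,k}$), and then combine the two one-sided bounds via the reverse union bound of Appendix~B to recover (\ref{eq:two_sided_constraints_1}). The only cosmetic difference is that the paper works with $a_i^\intercal x_k$ and keeps $b_i$ on the right-hand side throughout, whereas you absorb $b_i$ into $\psi_{i,k}$; the sign bookkeeping you flag is handled correctly in both versions.
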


\begin{proof}
	Equation (\ref{eq:two_sided_constraints_1}) can equivalently be written as 
	\begin{align}
		\mathbb{P} (-f_{i,k}-b_i \leq a_i^\intercal x_k \leq f_{i,k} - b_i) \geq 1 - \beta_i \delta_k,
	\end{align}   
	or, equivalently, as
	\begin{align}    \label{eq:decoupled_two_sided}
		\mathbb{P}(a_i^\intercal x_k \leq f_{i,k} - b_i \wedge a_i^\intercal x_k \geq - f_{i,k} - b_i) \geq 1 - \beta_i \delta_k. 
	\end{align}
	Applying the reverse union bound on (\ref{eq:decoupled_two_sided}) yields
	\begin{align}
		&\mathbb{P}(a_i^\intercal x_k \leq f_{i,k} - b_i \wedge a_i^\intercal x_k \geq - f_{i,k} - b_i) \nonumber \\
		&\geq \mathbb{P}(a_i^\intercal x_k \leq f_{i,k} - b_i) + \mathbb{P}(a_i^\intercal x_k \geq - f_{i,k} - b_i) - 1.\label{eq:reverse_union_bound}
	\end{align}
	Thus, if the constraint
	\begin{equation}
		\mathbb{P}(a_i^\intercal x_k \leq f_{i,k} - b_i) + \mathbb{P}(a_i^\intercal x_k \geq - f_{i,k} - b_i) \geq 2 - \beta_i \delta_k, \label{eq:sum_of_probabilities}
	\end{equation}
	is satisfied, then inequality (\ref{eq:decoupled_two_sided}) holds and
	the original chance constraints (\ref{eq:two_sided_constraints_1}) will be satisfied as well. 
	The constraint in (\ref{eq:sum_of_probabilities}) is equivalent to the following decoupled constraints
	\begin{subequations}
		\begin{align}
			&\mathbb{P}(a_i^\intercal x_k \leq f_{i,k} - b_i) \geq \epsilon^1_{i,k}, \label{eq:regular_CC_1} \\
			&\mathbb{P}(a_i^\intercal x_k \geq - f_{i,k} - b_i) \geq \epsilon^2_{i,k}, \label{eq:regular_CC_2} \\
			&\epsilon^2_{i,k} + \epsilon^2_{i,k} \geq 2 - \beta_i \delta_k.
		\end{align}
	\end{subequations}
	Since $a_i^\intercal x_k$ is a Gaussian random variable with mean $a_i^\intercal \bar{x}_k$ and covariance $a_i^\intercal E_k \Sigma_{X} E_k^\intercal a_i$, it follows that the probabilities in (\ref{eq:regular_CC_1})-(\ref{eq:regular_CC_2}) can be 
	written in terms of the standard normal cumulative distribution function as follows
	\begin{subequations}~\label{eq:CDF_decoupled_CC}
		\begin{align}
			&\Phi\left[\frac{f_{i,k} - b_i - a_i^\intercal \bar{x}_k}{\sqrt{a_i^\intercal E_k \Sigma_X E_k^\intercal a_i}}\right] \geq \epsilon^1_{i,k}, \\
			&\Phi\left[\frac{f_{i,k} + b_i + a_i^\intercal \bar{x}_k}{\sqrt{a_i^\intercal E_k \Sigma_{X} E_k^\intercal a_i}}\right] \geq \epsilon^2_{i,k}.
		\end{align}
	\end{subequations}
	Finally, substituting the mean dynamics (\ref{eq:14}) and the corresponding covariance in to (\ref{eq:CDF_decoupled_CC}) yields the desired result.
\end{proof}

\noindent
\textit{Remark 8}: In the three-cut approximation, the parameters $\mathbf{\beta} := [\beta_1,\ldots,\beta_n]^\intercal$ need to be fixed so that the resulting constraints in (\ref{eq:three-cut-approx}) are not bilinear in the decision variables.
Similarly, in the reverse union bound approximation, the parameters $\epsilon^1_{i,k}$ and $\epsilon^2_{i,k}$ need to be fixed so that the resulting constraints in (\ref{eq:reverse_union_bound_constraints}) are not bilinear in the decision variables. 
\\

\noindent
\textit{Remark 9}: In (\ref{eq:two_sided_constraints_2}), the constraints can be equivalently written as 
\begin{equation}
	\| \mathbf{f}_k\|_2 \leq \kappa_k, ~~ k = 1,\ldots, N.
\end{equation}
Plugging in the mean dynamics (\ref{eq:14}) into the definition of $\kappa_k$ yields the SOC constraints
\begin{equation}
	\|\mathbf{f}_k\|_2 \leq c^\intercal E_k(\mathcal{A}\mu_0 + \mathcal{B} V) + d,
	\quad k=1,\ldots,N.
	\label{eq:SOC_f}
\end{equation}

In summary, the relaxed cone chance constraints (\ref{eq:relaxed_cone_constraints}) can be approximated using two-sided chance constraints as in (\ref{eq:three-cut-approx}) with (\ref{eq:SOC_f}) through the three-cut approximation, or alternatively as in (\ref{eq:reverse_union_bound_constraints}) with (\ref{eq:SOC_f}) through the reverse union bound inequality.
Since these constraints are convex, the resulting problem is convex and can be solved using standard SDP solvers, similarly to the polyhedral chance constraint case.
In the three-cut approximation, there will a total of $4n + 1$ constraints per time step, or $(4n+1)N$ total SOC constraints.
In the reverse union bound approximation, there will be a total of $2n + 1$ constraints per time step, or $(2n+1)N$ total SOC constraints.
Although the RUB relaxation is computationally more efficient, the three-cut relaxation is, in general, less conservative.
This was confirmed from our numerical examples in Section~\ref{sec:NumEx}.

\subsection{Geometric Approximation}

We limit the following discussion to the three-dimensional case,  which is
often the case in practice when enforcing position constraints.
However, the results can be generalized to $n$-dimensional convex cones by using a concentration inequality for $\chi^2$ random variables \cite{JoshJack}. 
For simplicity, let $b = 0$ in (\ref{eq:34}), which corresponds to a cone centered at the origin. 
Letting the state be $x := [p^\intercal,\dot{p}^\intercal]^\intercal$, where $p\in\mathbb{R}^3$ denotes the position, the chance constraints (\ref{eq:35}) become
\begin{align}
	&\mathbb{P}\left(\bigg\|
	\begin{bmatrix}
		A_c & 0 \\
		0 & 0 
	\end{bmatrix}
	\begin{bmatrix}
		p_k \\
		\dot{p}_k
	\end{bmatrix}\bigg\| \leq [c_c \ 0]^\intercal 
	\begin{bmatrix}
		p_k \\
		\dot{p}_k
	\end{bmatrix}
	+ d\right) \geq 1 - \delta_k,
\end{align}
or equivalently,
\begin{align} \label{chance:eq45}
	\mathbb{P}(\|A_c p_k \| \leq c_c^\intercal p_k + d) \geq 1 - \delta_k,
\end{align}
where $c_c\in\mathbb{R}^{3}, A_c \in \mathbb{R}^{3\times 3}$ parametrize the cone.
In most three-dimensional applications, the matrix $A_c$ has two nonzero diagonal elements, and one zero diagonal element. 
As such, the vector $A_c p_k$ will have one zero element.
Let $H\in\mathbb{R}^{2\times 3}$ be defined such that it extracts the nonzero elements of $A_c p_k$.
This is needed so as to reduce the dimensionality of the random vector inside the norm, from which we can more easily approximate the chance constraint.
It follows that the chance constraints (\ref{chance:eq45}) become
\begin{equation}
	\mathbb{P}(\|H A_c I_p E_k X\| \leq c_c^\intercal I_p E_k X + d) \geq 1 - \delta_k, \label{eq:geometric_cone_CC}
\end{equation}
where $I_p := [I_{3},0_{3}]\in\mathbb{R}^{3\times 6}$.
From a geometric point of view, one can think of the constraints (\ref{eq:geometric_cone_CC}) as imposing, at each time step $k$, 
that the random vector $\xi_k := HA_cp_k \in \mathbb{R}^2$ lies inside the \textit{disk} of radius $r_k = c_c^\intercal p_k + d$ with probability greater than $1 - \delta_k$, 
However, since $X$ is a stochastic process, it follows that the radius of the disk is uncertain, therefore, and similar to Section~\ref{TS}, we relax the chance constraint such that the Gaussian vector $\xi$ lies within the \textit{mean} radius of the disk $\bar{r}_k = c_c^\intercal I_p E_k \bar{X} + d$.

Using this approximation, the chance constraints (\ref{eq:35a})  become
\begin{equation}\label{eq:I}
	\mathbb{P}(\|\xi_k\|_2 \leq \bar{r}_k) \geq 1 - \delta_k.
\end{equation}
Note that the random variable $\xi_k = AE_k X$ is Gaussian such that $\xi_k \sim \mathcal{N}(\bar{\xi}_k,\Sigma_{\xi_k})$, with mean $\bar{\xi}_k := HA_cI_pE_k\bar{X}$ and covariance $\Sigma_{\xi_k} := HA_cI_pE_k\Sigma_XE_k^\intercal I_p^\intercal A_c^\intercal H^\intercal$. 
So far, we have turned the convex cone chance constraint (\ref{eq:35a}) into the chance constraint (\ref{eq:I}) that requires the probability of a Gaussian random vector 
being inside a circle of a given radius be greater than $1 - \delta_k$. 
Similar to the methodology in \cite{JoshJack}, this problem can be analytically solved as follows.
\begin{prop}
	Let $\zeta \sim \mathcal{N}(0,\Sigma_{\zeta})$ be a two-dimensional random vector. 
	Then, for $a>0$,
	\begin{equation}\label{eq:III}
		\mathbb{P}\big(\zeta^\intercal \Sigma_{\zeta}^{-1}\zeta \leq a^2\big) = 1 - e^{-\frac{1}{2}a^2}.
	\end{equation}
\end{prop}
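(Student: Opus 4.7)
The plan is to reduce the statement to a standard Gaussian calculation via a whitening transformation, then compute the resulting integral in polar coordinates (or, equivalently, invoke the $\chi^2_2$ CDF).

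First, I would whiten $\zeta$. Since $\Sigma_\zeta$ is positive definite, it admits a Cholesky factorization $\Sigma_\zeta = LL^\intercal$ with $L$ invertible. Define $\eta := L^{-1}\zeta$; then $\eta \sim \mathcal{N}(0,I_2)$ and
\begin{equation*}
\zeta^\intercal \Sigma_\zeta^{-1} \zeta \;=\; \zeta^\intercal (LL^\intercal)^{-1} \zeta \;=\; (L^{-1}\zeta)^\intercal (L^{-1}\zeta) \;=\; \|\eta\|_2^2.
\end{equation*}
Therefore the event $\{\zeta^\intercal \Sigma_\zeta^{-1}\zeta \leq a^2\}$ coincides with $\{\|\eta\|_2 \leq a\}$, and the probability to be computed is simply the mass of the standard bivariate normal on the disk of radius $a$.

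Next, I would evaluate this probability directly by passing to polar coordinates $\eta = (r\cos\theta, r\sin\theta)$ with Jacobian $r$:
\begin{equation*}
\mathbb{P}(\|\eta\|_2 \leq a) \;=\; \int_0^{2\pi}\!\!\int_0^{a} \frac{1}{2\pi}\, e^{-r^2/2}\, r\, \mathrm{d}r\, \mathrm{d}\theta.
\end{equation*}
The angular integral contributes a factor of $2\pi$ that cancels the normalization, and the radial integral, with the substitution $u = r^2/2$, reduces to $\int_0^{a^2/2} e^{-u}\,\mathrm{d}u = 1 - e^{-a^2/2}$, which is the claimed identity. Alternatively, one can observe that $\|\eta\|_2^2$ is a sum of squares of two independent standard normals, hence $\chi^2_2$-distributed, which coincides with an exponential distribution of rate $1/2$; its CDF evaluated at $a^2$ immediately yields $1 - e^{-a^2/2}$.

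There is no genuine obstacle here: the proof is a one-step change of variables followed by an elementary integral. The only thing worth flagging is that the formula depends on the dimension being exactly two (so that the Jacobian factor $r$ exactly matches the derivative $\tfrac{\mathrm{d}}{\mathrm{d}r}(r^2/2)$); the analogous identity in $n$ dimensions is no longer closed-form in elementary functions, which is why the paper restricted to the planar case and referred to \cite{JoshJack} for the higher-dimensional generalization via $\chi^2$ concentration inequalities.
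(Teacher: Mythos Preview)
Your proof is correct and follows essentially the same route as the paper: a whitening transformation (you use the Cholesky factor $L^{-1}$, the paper uses $\Sigma_\zeta^{-1/2}$, but both reduce the quadratic form to $\|\cdot\|_2^2$ of a standard bivariate normal) followed by the same polar-coordinate integral. Your additional remark that $\|\eta\|_2^2 \sim \chi^2_2 = \mathrm{Exp}(1/2)$ is a clean shortcut the paper does not mention.
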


\begin{proof}
	The probability density function (PDF) of $\zeta$ is given by
	\begin{equation}
		\mathcal{N}(0,\Sigma_{\zeta}) = \frac{1}{2\pi|\det\Sigma_{\zeta}|^{\frac{1}{2}}}e^{-\frac{1}{2}\zeta^\intercal\Sigma_{\zeta}^{-1}\zeta}.
	\end{equation}
	Then, the probability in (\ref{eq:III}) is given explicitly by
	\begin{equation}\label{eq:IV}
		\mathbb{P}(\zeta^\intercal\Sigma_{\zeta}^{-1}\zeta \leq a^2) = \frac{1}{2\pi|\det\Sigma_{\zeta}|^{\frac{1}{2}}}\int_{\Omega_{\zeta}} e^{-\frac{1}{2}\zeta^\intercal\Sigma_{\zeta}^{-1}\zeta}\, \d\zeta,
	\end{equation}
	where $\Omega_{\zeta} := \{\zeta: \zeta^\intercal \Sigma_{\zeta}^{-1} \zeta \leq a^2\}$.
	%	Physically, this corresponds to integrating over an \textit{ellipsoidal} area in two dimensions, where the ellipse is characterized by the matrix $\Sigma_{\zeta}^{-1}$. 
	Changing coordinates such that $\nu := \Sigma_{\zeta}^{-\frac{1}{2}}\zeta = (\rho\cos\phi,\rho\sin\phi)$ so that $\d\nu = |\det \Sigma_{\zeta}|^{-\frac{1}{2}} \, \d\zeta$, note that the sets $\{\zeta^\intercal\Sigma_{\zeta}^{-1}\zeta \leq a^2\}$ and $\{\|\nu\|_2 \leq a\}$ are equivalent. Thus, the integral in (\ref{eq:IV}) becomes
	\begin{equation}
		\mathbb{P}(\zeta^\intercal\Sigma_{\zeta}^{-1}\zeta \leq a^2) = \mathbb{P}(\|\nu\|_2 \leq a) = \frac{1}{2\pi} \int_{\Omega_{\nu}}e^{-\frac{1}{2}\nu^\intercal \nu} \, \d\nu,
	\end{equation}
	where $\Omega_{\nu} := \{\nu: \|\nu\|_2 \leq a\}$.
	The last integral is straightforward to evaluate in two dimensions, namely, 
	\begin{equation}~\label{eq:V}
		\mathbb{P}(\|\nu\|_2 \leq a) = \frac{1}{2\pi}\int_{0}^{2\pi}\int_{0}^{a}e^{-\frac{1}{2}\rho^2}  \rho \, \d \rho\, \d\phi = 1 - e^{-\frac{1}{2}a^2},
	\end{equation}
	which yields the desired result.
\end{proof}

\begin{lem}~\label{lemma:1}
	Let $\zeta \sim \mathcal{N}(0,\Sigma_{\zeta})$ be a two-dimensional random vector, let $\sigma_{\zeta}^2 = \lambda_{\mathrm{max}}(\Sigma_{\zeta})$, and let $r > 0$. 
	Then,
	\begin{equation}~\label{eq:VI}
		\mathbb{P}(\|\zeta\|_2 \leq r) \geq 1 - e^{-r^2/2\sigma_{\zeta}^2}.
	\end{equation}
\end{lem}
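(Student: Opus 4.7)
The plan is to reduce Lemma~\ref{lemma:1} to the exact probability identity (\ref{eq:V}) by comparing the Euclidean ball $\{\|\zeta\|_2 \le r\}$ to an appropriately sized Mahalanobis (ellipsoidal) ball, and then invoking the previous proposition.

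The key observation is that for any symmetric positive definite matrix $\Sigma_\zeta$, the Rayleigh quotient bound gives $\zeta^\intercal \Sigma_\zeta^{-1} \zeta \ge \lambda_{\min}(\Sigma_\zeta^{-1}) \|\zeta\|_2^2 = \|\zeta\|_2^2 / \sigma_\zeta^2$, since $\lambda_{\min}(\Sigma_\zeta^{-1}) = 1/\lambda_{\max}(\Sigma_\zeta) = 1/\sigma_\zeta^2$. Consequently, whenever $\zeta^\intercal \Sigma_\zeta^{-1} \zeta \le r^2/\sigma_\zeta^2$ we necessarily have $\|\zeta\|_2^2 \le r^2$, which gives the set inclusion
\begin{equation*}
\{\zeta : \zeta^\intercal \Sigma_\zeta^{-1} \zeta \le r^2/\sigma_\zeta^2\} \subseteq \{\zeta : \|\zeta\|_2 \le r\}.
\end{equation*}

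By monotonicity of probability, this inclusion yields $\mathbb{P}(\|\zeta\|_2 \le r) \ge \mathbb{P}(\zeta^\intercal \Sigma_\zeta^{-1} \zeta \le r^2/\sigma_\zeta^2)$. Applying the previous proposition with $a = r/\sigma_\zeta$ converts the right-hand side into the closed-form $1 - e^{-r^2/(2\sigma_\zeta^2)}$, which is exactly (\ref{eq:VI}).

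I do not anticipate any serious obstacle: the only nontrivial ingredient is the Rayleigh quotient inequality, and the rest is a direct appeal to the preceding proposition. The one subtlety worth stating clearly is that the bound is tight when $\Sigma_\zeta$ has equal eigenvalues (i.e., isotropic noise) and becomes progressively more conservative as the eigenvalue gap grows, since replacing the true ellipsoidal level set by an inscribed disk of radius $r$ discards probability mass along the minor axis. This conservatism is the natural price for obtaining a clean, closed-form, distribution-independent bound that can be embedded directly into the subsequent SOC chance-constraint formulation.
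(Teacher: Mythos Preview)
Your proposal is correct and follows essentially the same approach as the paper's proof: both use the eigenvalue inequality $\zeta^\intercal \Sigma_\zeta^{-1}\zeta \ge \|\zeta\|_2^2/\sigma_\zeta^2$ (you cite it as the Rayleigh quotient bound, the paper derives it via the spectral decomposition $\Sigma_\zeta = PDP^\intercal$), translate it into the set inclusion, and then apply the preceding proposition with $a = r/\sigma_\zeta$. Your remark on tightness matches the paper's closing geometric comment about the ellipse being inscribed in the circle.
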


\begin{proof}
	Since the covariance matrix is positive definite, we can decompose it as $\Sigma_{\zeta} = PDP^\intercal$ where $D$ is a diagonal matrix containing the eigenvalues $\lambda_i$ of $\Sigma_{\zeta}$ and $P$ is an orthogonal matrix.
	Since $\sigma_{\zeta}^2 = \max_i\lambda_i$, it follows that 
	\begin{equation}
		D^{-1} = \frac{1}{\sigma_{\zeta}^2}\text{diag}(\sigma_{\zeta}^2 / \lambda_i) \geq \frac{1}{\sigma_{\zeta}^2}I.
	\end{equation}
	From the previous expression, it follows that 
	\begin{equation}
		\zeta^\intercal \Sigma_{\zeta}^{-1} \zeta = \zeta^\intercal P D^{-1} P^\intercal \zeta \geq \frac{1}{\sigma_{\zeta}^2}\zeta^\intercal P P^\intercal \zeta = \frac{1}{\sigma_{\zeta}^2}\|\zeta\|_2^2.
	\end{equation}
	Rearranging the previous inequality gives $\|\zeta\|_2^2/\sigma_{\zeta}^2 \leq \zeta^\intercal \Sigma_{\zeta}^{-1}\zeta$, and using (\ref{eq:III}), it follows that 
	\begin{equation}
		\mathbb{P}(\|\zeta\|_2^2 \leq \sigma_{\zeta}^2 a^2) \geq \mathbb{P}(\zeta^\intercal \Sigma_{\zeta}^{-1} \zeta \leq a^2) = 1 - e^{-\frac{1}{2}a^2}.
	\end{equation}
	Setting $r^2 = \sigma_{\zeta}^2 a^2$ achieves the desired result. 
	Geometrically, the level sets $\{\zeta^\intercal \Sigma_{\zeta}^{-1} \zeta = a^2\}$ define the contours of ellipses having probability $1 - e^{-a^2/2}$ and the level sets $\{\|\zeta\|_2^2 = r^2\}$ are the smallest circles that contain these ellipses.
\end{proof}

\begin{theorem}~\label{prop:2}
	Let $\xi\sim\mathcal{N}(\bar{\xi},\Sigma_{\xi})$ be a two-dimensional random vector, let $\sigma_{\xi}^2 = \lambda_{\mathrm{max}}(\Sigma_{\xi})$, and let $r > 0$. 
	Then,
	\begin{equation}~\label{eq:Thm2}
		\|\bar{\xi}\|_2 + \sigma_{\xi}\sqrt{2\log\frac{1}{\delta}} \leq r \Rightarrow \mathbb{P}(\|\xi\|_2 \leq r) \geq 1 - \delta.
	\end{equation}
\end{theorem}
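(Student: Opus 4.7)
The plan is to reduce Theorem~\ref{prop:2} to the zero-mean case handled by Lemma~\ref{lemma:1} via a mean-shift and the triangle inequality. First I would introduce the centered random vector $\zeta := \xi - \bar\xi$, so that $\zeta \sim \mathcal{N}(0,\Sigma_\xi)$ and $\lambda_{\max}(\mathrm{Cov}(\zeta)) = \sigma_\xi^2$. By the triangle inequality, $\|\xi\|_2 \le \|\bar\xi\|_2 + \|\zeta\|_2$, which gives the event inclusion
\begin{equation*}
    \{\|\zeta\|_2 \le r - \|\bar\xi\|_2\} \subseteq \{\|\xi\|_2 \le r\},
\end{equation*}
and hence $\mathbb{P}(\|\xi\|_2 \le r) \ge \mathbb{P}(\|\zeta\|_2 \le r - \|\bar\xi\|_2)$.

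Next, note that the hypothesis $\|\bar\xi\|_2 + \sigma_\xi\sqrt{2\log(1/\delta)} \le r$ guarantees that $\tilde r := r - \|\bar\xi\|_2 > 0$ (assuming $\delta \in (0,1)$), so Lemma~\ref{lemma:1} applies to $\zeta$ with radius $\tilde r$, yielding
\begin{equation*}
    \mathbb{P}(\|\zeta\|_2 \le \tilde r) \ge 1 - e^{-\tilde r^{\,2}/(2\sigma_\xi^2)}.
\end{equation*}
The final step is to verify that $1 - e^{-\tilde r^{\,2}/(2\sigma_\xi^2)} \ge 1 - \delta$. Rearranging, this is equivalent to $\tilde r \ge \sigma_\xi\sqrt{2\log(1/\delta)}$, which is precisely the hypothesis after subtracting $\|\bar\xi\|_2$ from both sides. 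Chaining the two inequalities then yields the conclusion.

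None of the steps look genuinely hard; the only subtlety worth calling out is that the triangle-inequality bound is tight only when $\bar\xi$ and $\zeta$ are aligned, so the implication is intentionally a one-sided sufficient condition rather than an equivalence. This is consistent with the theorem being used as a \emph{conservative} convex approximation to the disk chance constraint in~\eqref{eq:I}: once $\bar\xi$ and $\sigma_\xi$ are expressed in terms of the decision variables $V$ and $K$ through the definitions of $\bar\xi_k$ and $\Sigma_{\xi_k}$, the left-hand side of~\eqref{eq:Thm2} becomes a sum of a norm of an affine expression in $(V,K)$ and a spectral radius term, both of which admit standard SOC/LMI encodings, so the resulting constraint is convex and directly usable in the lower-stage SDP from Section~III.
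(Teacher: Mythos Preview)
Your proposal is correct and follows essentially the same argument as the paper: center the vector, use the triangle-inequality event inclusion to reduce to the zero-mean case, invoke Lemma~\ref{lemma:1}, and then check that the hypothesis is exactly the algebraic condition needed for $1 - e^{-\tilde r^{\,2}/(2\sigma_\xi^2)} \ge 1 - \delta$. The only cosmetic difference is ordering---the paper first unwinds the hypothesis into the exponential bound and then applies the triangle inequality and the lemma, whereas you do the probabilistic reduction first and the algebra last.
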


\begin{proof}
	First, note that for $\|\xi\|_2 \leq r$, the following implications hold
	\begin{subequations}~\label{eq:pfThm2}
		\begin{align}
			\|\bar{\xi}\|_2 + &\sigma_{\xi}\sqrt{2\log\frac{1}{\delta}} \leq r \Rightarrow \sigma_{\xi}\sqrt{2\log\frac{1}{\delta}} \leq r - \|\bar{\xi}\|\\
			&\Rightarrow 2\sigma_{\xi}^2\log\frac{1}{\delta} \leq (r - \|\bar{\xi}\|_2)^2\\
			& \Rightarrow  2\sigma_{\xi}^2\log\delta \geq -(r - \|\bar{\xi}\|_2)^2\\
			& \Rightarrow  \log\delta \geq - \frac{(r - \|\bar{\xi}\|_2)^2}{2\sigma_{\xi}^2}\\
			& \Rightarrow  \delta \geq \exp\bigg( - \frac{(r - \|\bar{\xi}\|_2)^2}{2\sigma_{\xi}^2}\bigg)\\
			& \Rightarrow  1 - \delta \leq 1 - \exp\bigg( - \frac{(r - \|\bar{\xi}\|_2)^2}{2\sigma_{\xi}^2}\bigg).  \label{eq:pfThm2f}
		\end{align}
	\end{subequations}
	%	From the triangle inequality, $\|\xi\|_2 = \|\bar{\xi} + \tilde{\xi}\|_2 \leq \|\bar{\xi}\|_2 + \|\tilde{\xi}\|_2$. 
	Since $\{\xi: \|\bar{\xi}\|_2 + \|\tilde{\xi}\|_2 \leq r\} \subseteq \{\xi: \|\xi\|_2 \leq r\}$, where $\tilde{\xi} := \xi - \bar{\xi}$, it follows that
	\begin{equation}\label{eq:triangle}
		\mathbb{P}(\|\xi\|_2 \leq r) \geq \mathbb{P}(\|\bar{\xi}\|_2 + \|\tilde{\xi}\|_2 \leq r) = \mathbb{P}(\|\tilde{\xi}\|_2 \leq r - \|\bar{\xi}\|_2).
	\end{equation}
	Since $\tilde{\xi}$ is a \textit{zero-mean} Gaussian vector, applying Lemma 2 gives
	\begin{equation}
		\mathbb{P}(\|\tilde{\xi}\|_2 \leq r - \|\bar{\xi}\|_2) \geq 1 - \exp\bigg(- \frac{(r - \|\bar{\xi}\|_2)^2}{2\sigma_{\xi}^2}\bigg).\\
	\end{equation}
	Finally, by (\ref{eq:pfThm2}) and (\ref{eq:triangle}), we obtain the desired result.
\end{proof}

Using Theorem~\ref{prop:2}, we can now satisfy (\ref{eq:I}) by enforcing
\begin{equation}\label{eq:VII}
	\sigma_{\xi_k}\sqrt{2\log\frac{1}{\delta_k}} \leq \bar{r}_k - \|\bar{\xi}_k\| =: \bar{R}_k.
\end{equation}
%Lastly, we get rid of the singular value term in (\ref{eq:VII}) for a more explicit formula in terms of the matrix $\Sigma_{\xi}$ from before. 
%Note that $\sigma_{\xi_k}^2 = \lambda_{\mathrm{max}}(\Sigma_{\xi}) = \lambda_{\mathrm{max}}(AE_k\Sigma_XE_k^\intercal A^\intercal)$.
%Also note that by definition, the induced two-norm of a matrix $B$ is $\|B\|_2 := \sqrt{\lambda_{\mathrm{max}}(B^\intercal B)}$. 
Using $\Sigma_X = (I + \mathcal{B}K)\Sigma_Y(I + \mathcal{B}K)^\intercal$ and noting that $\sigma_{\xi_k}^2 = \lambda_{\textrm{max}}(\Sigma_{\xi})$, we obtain
\begin{equation}
	\sigma_{\xi_k}^2 = 
	%\lambda_{\mathrm{max}}(AE_k\Sigma_XE_k^\intercal A^\intercal) = 
	\|\Sigma_Y^{1/2}(I + \mathcal{B}K)^\intercal E_k^\intercal I_p^\intercal A_c^\intercal H^\intercal\|_2^2.
\end{equation}
In summary, the cone chance constraints (\ref{eq:35a}) become 
\begin{align}~\label{eq:62}
	&\!\sqrt{2\log\frac{1}{\delta_k}}\|\Sigma_Y^{1/2}(I + \mathcal{B}K)^\intercal E_k^\intercal I_p^\intercal A_c^\intercal H^\intercal\|_2 \leq \bar{R}_k,
\end{align}
for $ k = 1,\ldots,N$.

\section{Spacecraft Rendezvous Example}  \label{sec:NumEx}

\subsection{IRA-CS with Polytopic Chance Constraints}

In this section, we implement the previous theory of CS with optimal risk allocation to the problem of spacecraft proximity operations in orbit. 
We consider the problem where one of the spacecraft, called the Deputy, approaches and docks with the second spacecraft, called the Chief, 
such that in the process, the Deputy remains within the line-of-sight (LOS) of the Chief, defined initially to be the polytopic region shown in Figure~\ref{fig:2}. 
%Afterwards, we will consider the more natural case of a cone region and compare the two problem formulations.
%
\begin{figure}[!htb]
	\centering
	\includegraphics[scale=0.38]{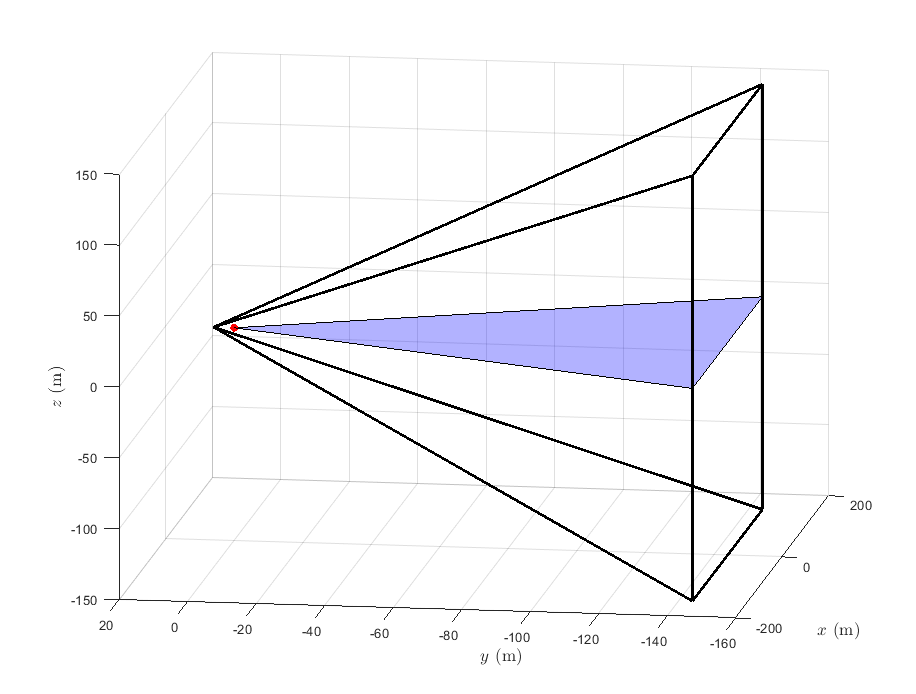}
	\caption{Feasible state space region for spacecraft rendezvous problem.}
	\label{fig:2}
\end{figure}

It is assumed that the two spacecraft are in the LVLH frame, that is, a rotating reference frame where the $z$ axis is oriented in the direction of the center of the Earth, the $y$ axis negative to the orbit normal, and the $x$ axis to complete the right hand rule.
Moreover, assuming that the Chief is in a circular orbit (at an altitude $h = 800$ km), the relative dynamics of the motion between the two spacecraft are given by the Clohessy-Wiltshire-Hill Equations~\cite{CWH},
\begin{subequations}~\label{eq:40}
	\begin{align}
		\ddot{x} &= 3\omega^2 x + 2\omega \dot{y} + F_x/m_d,\\
		\ddot{y} &= -2\omega\dot{x} + F_y/m_d,\\
		\ddot{z} &= -\omega^2 z + F_z/m_d,
	\end{align}
\end{subequations}
where $m_d = 300$ kg is the mass of the Deputy, $\omega = \sqrt{\mu/R_0^3}$ is the orbital frequency, and $F\coloneqq[F_x,F_y,F_z]^\intercal$ represents the thrust input components 
to the spacecraft. 
It is assumed that the thrust is generated by a chemical propulsion system with PWM (pulse-width-modulation) able to implement continuous thrust profiles from impulsive forces.

The equations of motion (\ref{eq:40}) are written in a relative coordinate system, where the Chief is located at the origin, and $x,y,z$ represent the position of the Deputy with respect to the Chief. Note that the $z$ dynamics are decoupled from the $x-y$ dynamics; furthermore, the $z$ dynamics are globally asymptotically stable, so in theory we only need to control the planar dynamics. 
In Figure \ref{fig:2} the blue area represents the planar region. 
To write the system in state space form, let $x \coloneqq [p_x,p_y,p_z,\dot{p}_x,\dot{p}_y,\dot{p}_z]^\intercal\in\mathbb{R}^6$ to obtain the LTI system $\dot{x} = Ax+Bu$, where
\begin{equation}~\label{eq:41}
	A = 
	\begin{bmatrix}
		0 & 0 & 0 & 1 & 0 & 0\\
		0 & 0 & 0 & 0 & 1 & 0\\
		0 & 0 & 0 & 0 & 0 & 1\\
		3\omega^2 & 0 & 0 & 0 & 2\omega & 0\\
		0 & 0 & 0 & -2\omega & 0 & 0\\
		0 & 0 & -\omega^2 & 0 & 0 & 0
	\end{bmatrix},
	\
	B = m_c^{-1}[0_3,I_3]^\intercal,
\end{equation}
and
$u \coloneqq [F_x,F_y,F_z]^\intercal\in\mathbb{R}^3$. 
To discretize the system, we divide the time interval into $N = 15$ steps, with a time interval $\Delta t = 4$~sec. 
Assuming a zero-order hold (ZOH) on the control and adding noise that captures any modeling and discretization errors, as well as other environmental disturbances, yields the discrete system
\begin{equation}
	x_{k+1} = A_dx_k + B_du_k + Gw_k,
\end{equation}
where $A_d = e^{A\Delta t}$, $B_d = \int_{0}^{\Delta t}e^{A\tau}B\ \d\tau$.
We choose the associated noise characteristics as $G = \text{diag}(10^{-4},10^{-4},5\times 10^{-8},5\times 10^{-8})$~\cite{ADF}. 
We assume that the initial state mean and covariance are $\mu_0 = [90,-120,90,0,0,0]^\intercal$ 
and $\Sigma_0 = \text{diag}(10,10,10,1,1,1)$,
respectively. 
We wish to steer the distribution from the above initial state to the final mean $\mu_f = 0$ with final 
covariance $\Sigma_f = \frac{1}{4}\Sigma_0$, while minimizing the cost function (\ref{eq:4}) with weight matrices $Q = \text{diag}(10,10,10,1,1,1)$ and $R = 10^3I_3$.
We impose the joint probability of failure over the whole horizon to be $\Delta = 0.03$, which implies that the probability of violating any state constraint over the whole horizon is less than 3\%. 
The control inputs are bounded as $\|u_k\|_{\infty} \leq 30$ N, which corresponds to a maximum acceleration of 10 cm/s$^2$.
Note that these bounds are \textit{hard} constraints as opposed to (soft) chance constraints. 
To implement this input hard constraint within the CS framework, the algorithm in \cite{IH} was used (see also Appendix~C).
It should be noted here that since saturation of the input may lead to non-Gaussian state evolution, the chance constraint inequality (\ref{eq:23}) may not hold anymore. 
For our purposes though, the formulated SOC constraints work well even for the non-Gaussian case.
This may lead to somewhat more conservative results, but for our problem, the difference turned out to be negligible.
%The details are given in the Appendix.

Lastly, in the iterative risk allocation algorithm, we use a scaling parameter $\rho_{(i)} = (0.7)(0.98)^i$ in Line~10 of the algorithm, where $i$ 
represents the current iteration. 
The SDP in Problem~2 was implemented in MATLAB using YALMIP\cite{YALMIP} along with MOSEK\cite{MOSEK} to solve the relevant optimization problems.

\begin{figure}[!htb]
	\centering
	\includegraphics[scale=0.32]{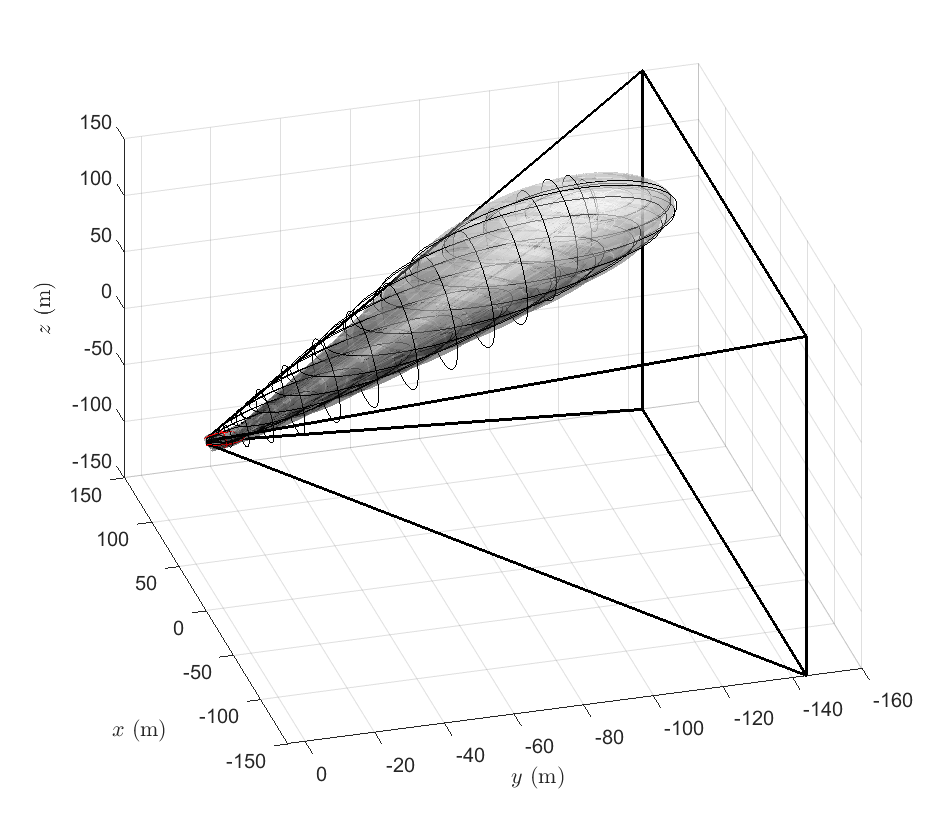}
	\caption{Optimal trajectories using IRA-CS, with $3-\sigma$ covariance ellipsoids.}
	\label{fig:3}
\end{figure}

\begin{figure}[!htb]
	\centering
	\hspace*{-0.5cm}
	\includegraphics[scale=0.45]{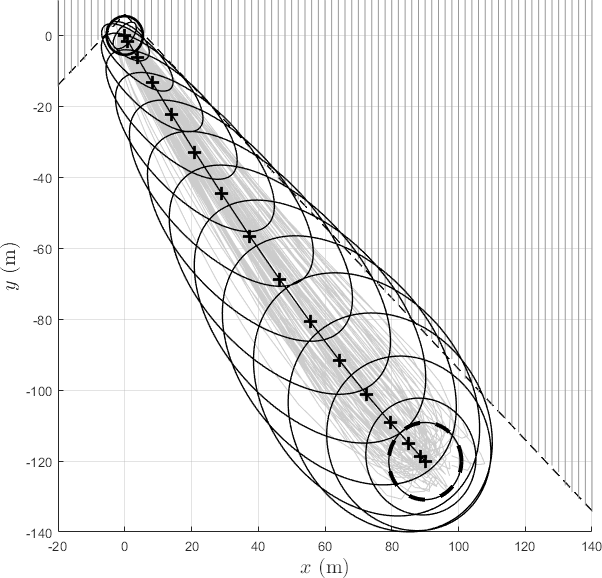}
	\caption{Optimal planar trajectories using IRA-CS, with 3-$\sigma$ covariance ellipses.}
	\label{fig:5}
\end{figure}

Figures~\ref{fig:3} and \ref{fig:5} show the trajectories with optimal risk allocation, and Figure~\ref{fig:5} shows the two-dimensional planar motion. 
Figure~\ref{fig:6} compares the terminal trajectories of CS with a uniform risk allocation with the proposed method. 
The two solutions look similar and both satisfy the terminal constraints on the mean and the covariance. 
However, due to the relaxation $\Sigma_N \leq \Sigma_f$, the uniform risk allocation scheme leads to more conservative solutions, as shown in Figure~\ref{fig:6}. 
The volume of the final covariance ellipsoid, $V_{N} \propto \log\det\Sigma_N$ is smaller for the uniform allocation solution compared to the optimal allocation solution (see Table~\ref{table:1}). 
In fact, we see that a consequence of optimal risk allocation is that it \textit{maximizes} the final covariance given all the constraints, while still being bounded by $\Sigma_f$.
\begin{figure}[!htb]
	\centering
	\hspace*{-0.5cm}
	\includegraphics[scale=0.27]{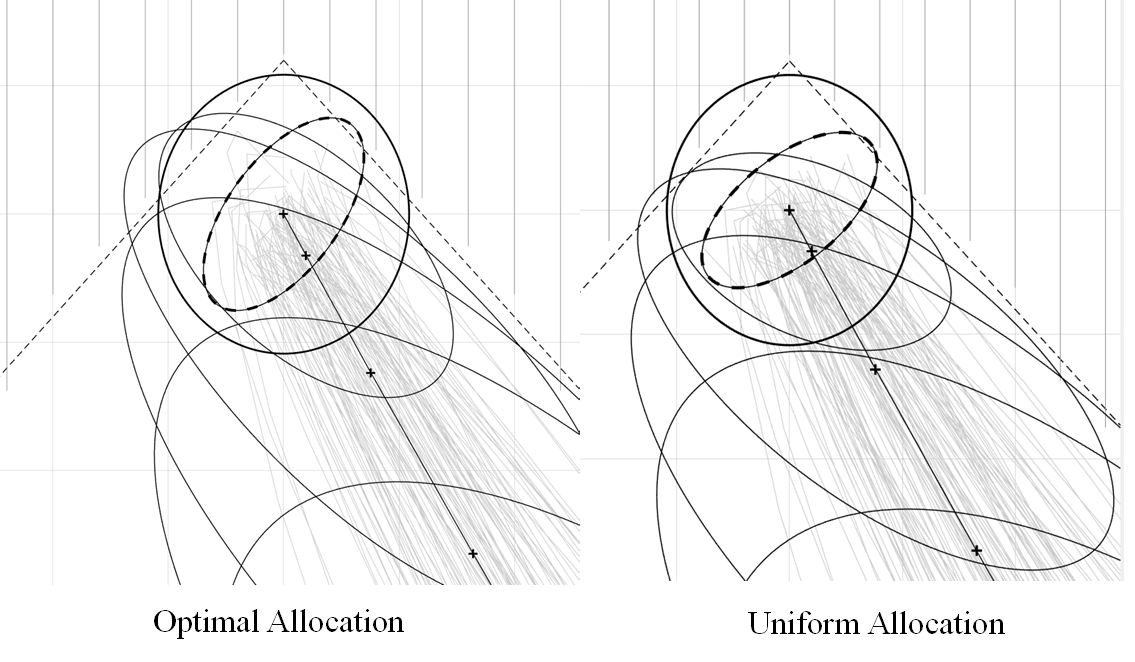}
	\caption{Comparison of terminal covariance steering using a uniform and the optimal risk allocation.}
	\label{fig:6}
\end{figure}

\begin{figure}[!htb]
	\centering
	\hspace*{-0.8cm}
	\includegraphics[scale=0.49]{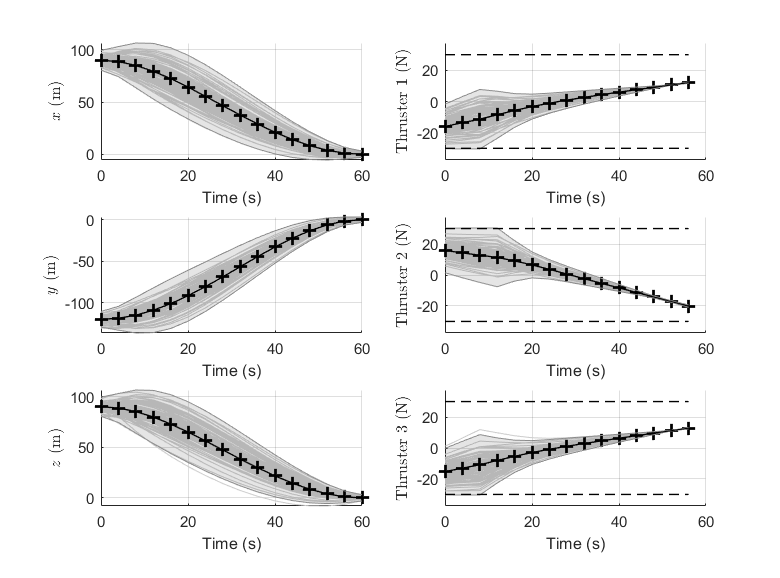}
	\caption{Trajectories of controlled system and their associated standard deviations using iterative risk allocation.}
	\label{fig:7}
\end{figure}

Figure~\ref{fig:7} shows the state trajectories and the optimal controls for the polyhedral chance constraints.
The control is almost linear but has a large variance in the first 10 time steps, where it may saturate due to the disturbances.
Figure~\ref{fig:9} shows the a priori allocation of risk, as well as the true risk $\bar{\delta}$ once the optimization is completed, where $\delta_r$ corresponds to the risk allocated for the right boundary and $\delta_u$ for the risk allocated for the top boundary. 
Notice that in Figure~\ref{fig:9}(a) the true risk exposure is \textit{much} lower than the allocated risk, which confirms the conclusion that the solutions for the uniform allocation case
can be overly conservative. 
Comparing to Figure~\ref{fig:9}(b), we see a close correspondence between the allocated risk and the true risk exposure over the whole horizon for the optimal risk allocation case.
It should be noted that although the true risk is still slightly less than the allocated risk, the error between the two is much smaller when compared to that of the uniform risk allocation strategy. 

\begin{figure}[!htb]
	\centering
	\begin{subfigure}{0.5\textwidth}
		\centering
		\hspace*{-0.8cm} 
		\includegraphics[scale=0.35]{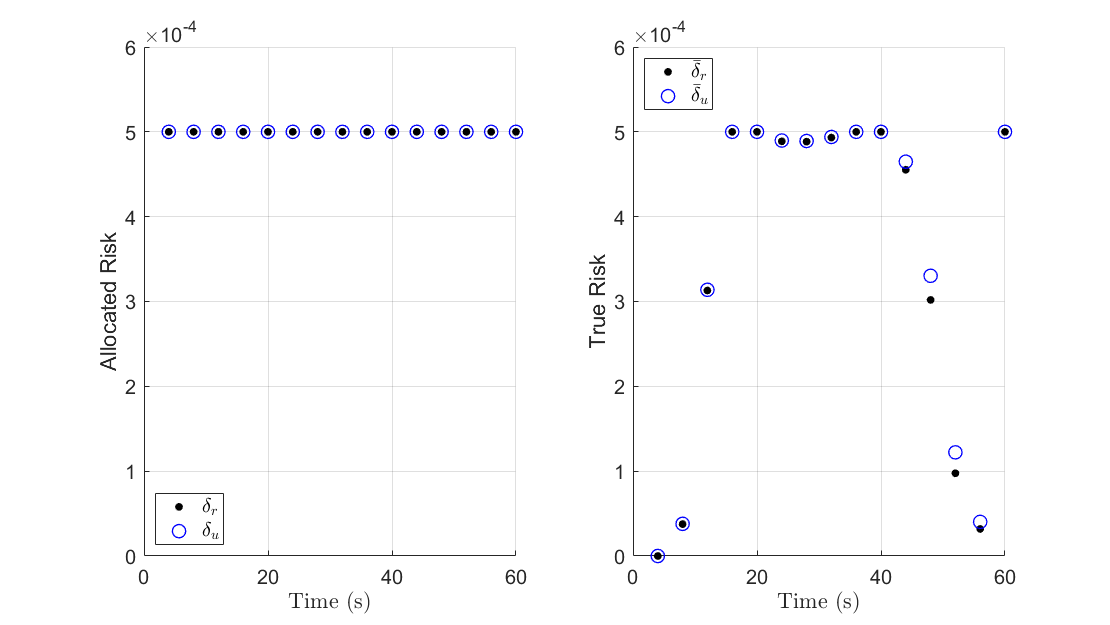}
		\captionof{figure}{Uniform allocation.}
		\label{fig:9a}
	\end{subfigure}\\
	\begin{subfigure}{0.5\textwidth}
		\centering
		\hspace*{-0.8cm} 
		\includegraphics[scale=0.35]{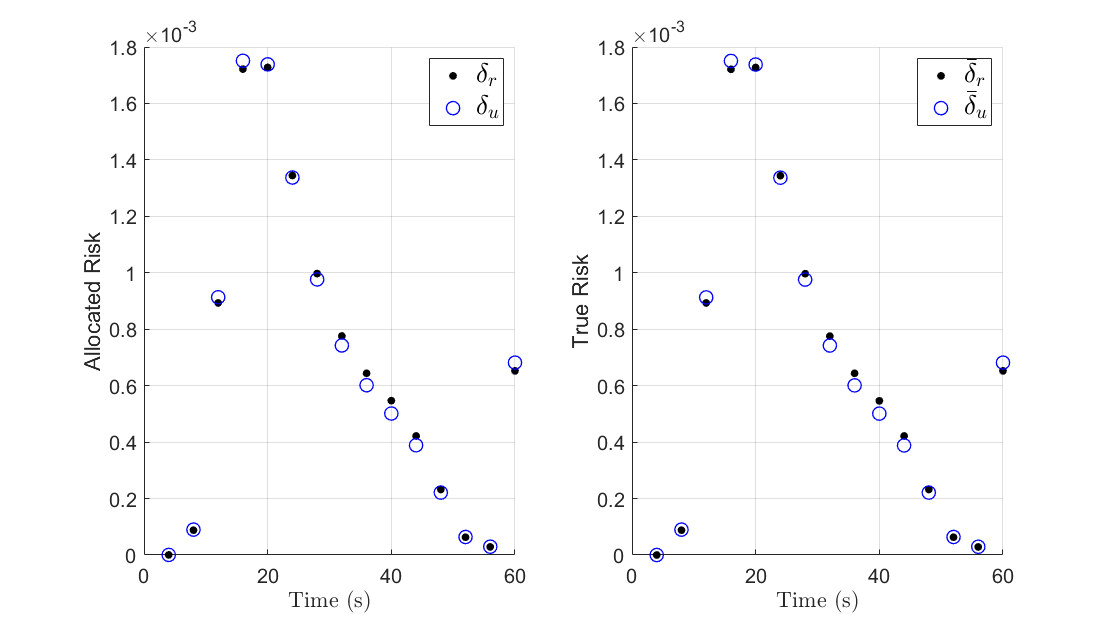}
		\captionof{figure}{Optimal allocation.}
		\label{fig:9b}
	\end{subfigure}
	\caption{Comparison of allocated risk and true risk using: (a) uniform risk allocation, (b) iterative risk allocation.}
	\label{fig:9}
\end{figure}

The iterative risk allocation algorithm is robust in the sense that the algorithm will assign risk proportionately to how close the solution trajectories are to the boundaries of the state space. 
Since solutions are close to the right and top boundaries of the allowable LOS region 
for most of the horizon, the optimal allocation weighs the respective risks more than those of the left and bottom boundaries.
Thus, IRA purposefully steers the trajectories away from the boundaries proportionally to the amount of risk that is allocated to violating those respective boundaries.
Table~\ref{table:1} shows the true joint probability of failure, defined as 
\begin{equation}~\label{eq:42}
	\bar{\Delta} \coloneqq 1 - \mathbb{P}\Bigg[\bigwedge_{k=1}^{N}\bigwedge_{j=1}^{M}\alpha_j^\intercal E_k X^* \leq \beta_j\Bigg].
\end{equation}
It is clear that the uniform risk allocation does not even come close to the desired design of $\Delta = 0.03$, while the IRA gives a true probability of failure very close to the desired one.
\begin{figure}[!htb]
	\centering
	\includegraphics[scale=0.37]{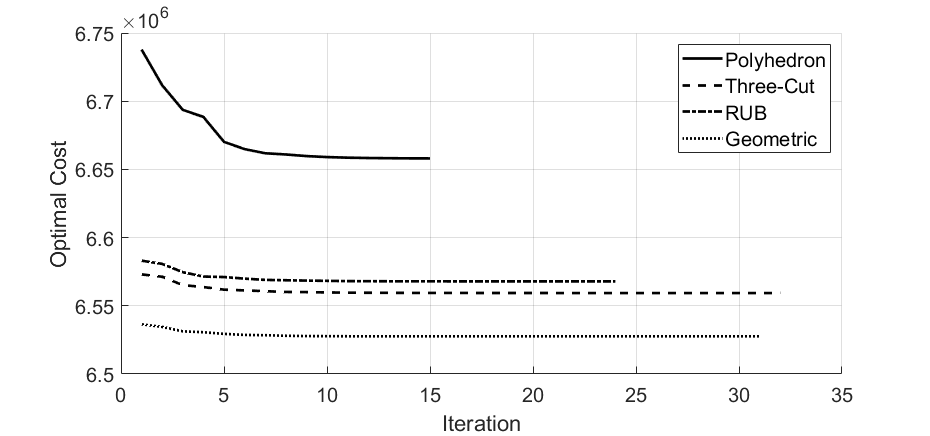}
	\caption{Optimal cost $J^*(\delta)$ after every IRA iteration.}
	\label{fig:10}
\end{figure}

\begin{table}[!htb]
	\centering
	\caption{Comparison of total true risk and terminal volume between uniform and optimal allocations.}
	{\tabulinesep=1.2mm
		\begin{tabu}{|c|c|c|c|c|}
			\hline
			- & Uniform & IRA Poly & IRA TC & IRA RUB \\
			\hline
			$\bar{\Delta}$ & 0.0123 & 0.02998 & 0.029990 & 0.029994 \\
			$V_N$ & 0.5546 & 0.6279 & 3.6818 & 3.7038 \\
			\hline 
			- & IRA GEO & CS-SMPC \cite{CSMPC} & SMPC \cite{MPC_Farina} &\\
			\hline
			$\bar{\Delta}$ & 0.029979 & 0.01128 & 0.00105 &\\
			$V_N$ & 4.0909 & - & - & \\
			\hline
	\end{tabu}}
	\label{table:1}
\end{table}

Finally, we looked at the optimal cost function over each IRA iteration, as in Figure \ref{fig:10}. 
The convergence criterion set in this example is $\epsilon = 10^{-5}$, or when all of the constraints are active or inactive, which can be proved in~\cite{b1} to 
be a sufficient condition for optimality for Problem~3. 
We see that indeed (\ref{eq:29}) holds, and the optimization resulted even in a slight decrease of the objective function.
Thus, the iterative risk allocation algorithm optimizes the risk allocation at each time step without increasing the cost.

\subsection{IRA-CS with Cone Chance Constraints}

%For the convex cone chance constraint case, we also implemented the method outlined in Section~IV, 
%namely the $4N$ constraints in (\ref{eq:39}). 
For this example, the following representation of a cone was used
\begin{equation}~\label{eq:43}
	\mathcal{X}^c = \{(x,y,z): \|A\bar{R}_{x}(\psi)x\|_2 \leq c^\intercal \bar{R}_{x}(\psi) x + d\},
\end{equation}
where $\bar{R}_{x}(\psi) := \textrm{blkdiag}(R_{x}(\psi), R_{x}(\psi))$ and $R_{x}(\psi) \in \mathrm{SO(3)}$ is the standard 3D rotation matrix. 
The angle of rotation is $\psi = \tan^{-1}(\bar{z}_0/\bar{y}_0)$, which corresponds to a body-mounted sensor on the Chief that is angled to the relative position vector of the Deputy.
Additionally, $A = \textrm{diag}(1,0,1,0,0,0),\ c = [0,\lambda,0]^\intercal$, and $\lambda = \tan(\theta)$, where $\theta = 15^\circ$ is the cone half-angle.
Lastly, $d = 10$, which corresponds to an offset of 10 meters from the origin.
\begin{comment}
This requirement translates to the individual chance constraints
\begin{equation}~\label{eq:44}
\mathbb{P} \big( x_k^2 + z_k^2 \leq (\lambda y_k)^2 \big) \geq 1 - \delta_k, \quad k = 1,\ldots,N.
\end{equation}
As discussed in Section~\ref{sec:quad}, the probabilistic constraint in (\ref{eq:44}) is relaxed to $\kappa = \lambda\bar{y}_k$, so that at each time step, the state is forced to stay inside a disk with radius $\lambda\bar{y}_k$. 
%This relaxation is needed because $y_k$ is a random variable, and only a deterministic $\kappa$ can be used in (\ref{eq:36}). 
\end{comment}
For the geometric approximation, we set $A_c = \textrm{diag}(1,0,1), h_1 = [1,0,0]^\intercal, h_2 = [0,0,1]^\intercal$, and $c_c = [0,\lambda,0]^\intercal$.
The constant choice of parameters $\beta_i = 1/n$ for all $i=1,\ldots, n$ was used across all constraints in the three-cut approximation and $\epsilon_1=\epsilon_2 = 1-\beta_i \delta_k/2$ was used for the reverse union bound approximation.
Lastly, the initial state was changed to $\mu_0 = [10,120,90,0,0,0]^\intercal$ for this simulation.

It should be noted that a rotated cone about the $i$th axis can be put in the form of a standard cone as in (\ref{eq:34}) via the transformation
\begin{equation}
	\tilde{A} := A\bar{R}_{i}(\psi), \quad \tilde{c} := \bar{R}^\intercal_{i}(\psi) c.
\end{equation}
Thus, it is possible to make successive rotations of a cone by adjusting the cone parameters $A$ and $c$.
In the context of the given parametrization, the cone becomes the set
\begin{equation}
	\bigg\|\begin{bmatrix}
		p_x \\
		p_y\sin\psi + p_z\cos\psi
	\end{bmatrix}
	\bigg\|_2 \leq \gamma(p_y\cos\psi - p_z\sin\psi) + d.
\end{equation}
\vspace*{-0.5cm}
\begin{figure}[!htb]
	\centering
	\begin{subfigure}[t]{.24\textwidth}
		\centering
		\hspace*{-0.5cm} 
		\includegraphics[scale=0.3]{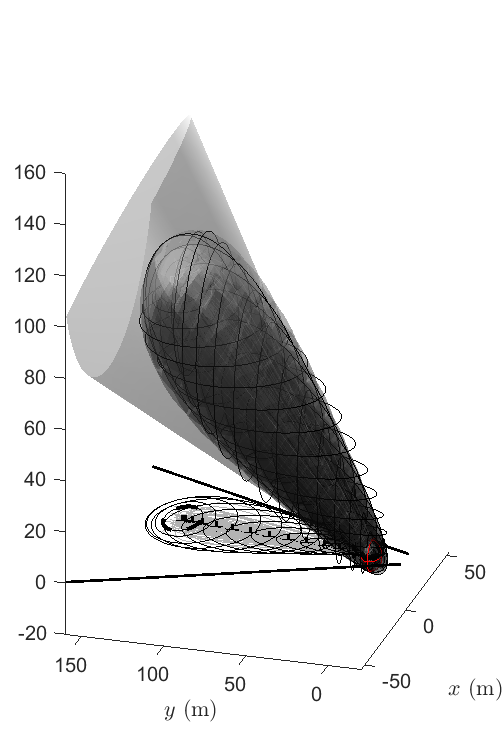}
		\captionof{figure}{Reverse union bound two-sided approximation.}
		\label{fig:11b}
	\end{subfigure}%
	\begin{subfigure}[t]{.24\textwidth}
		\centering
		\hspace*{-0.4cm}
		\vspace*{0.4cm}
		\includegraphics[scale=0.31]{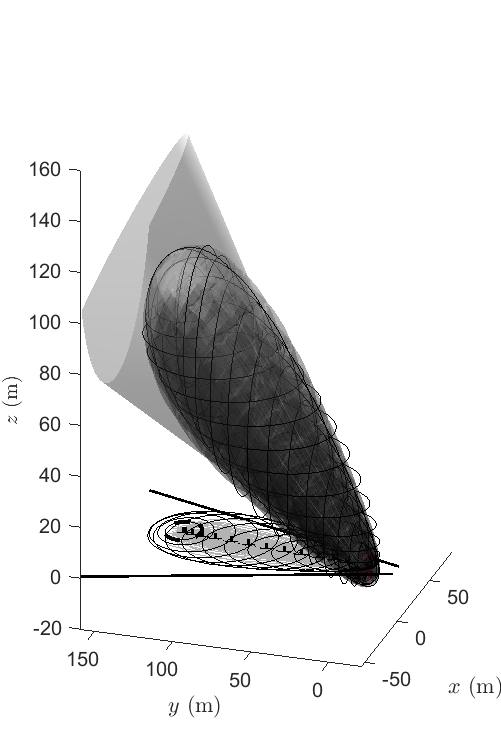}
		\captionof{figure}{Geometric approximation.}
		\label{fig:11a}
	\end{subfigure}
	\caption{Optimal trajectories using convex relaxation of conic chance constraints.}
	\label{fig:11}
\end{figure}
%
%\begin{figure}[!htb]
%	\centering
%	\begin{subfigure}{.24\textwidth}
%		\centering
%		\hspace*{-0.5cm} 
%		\includegraphics[scale=0.2]{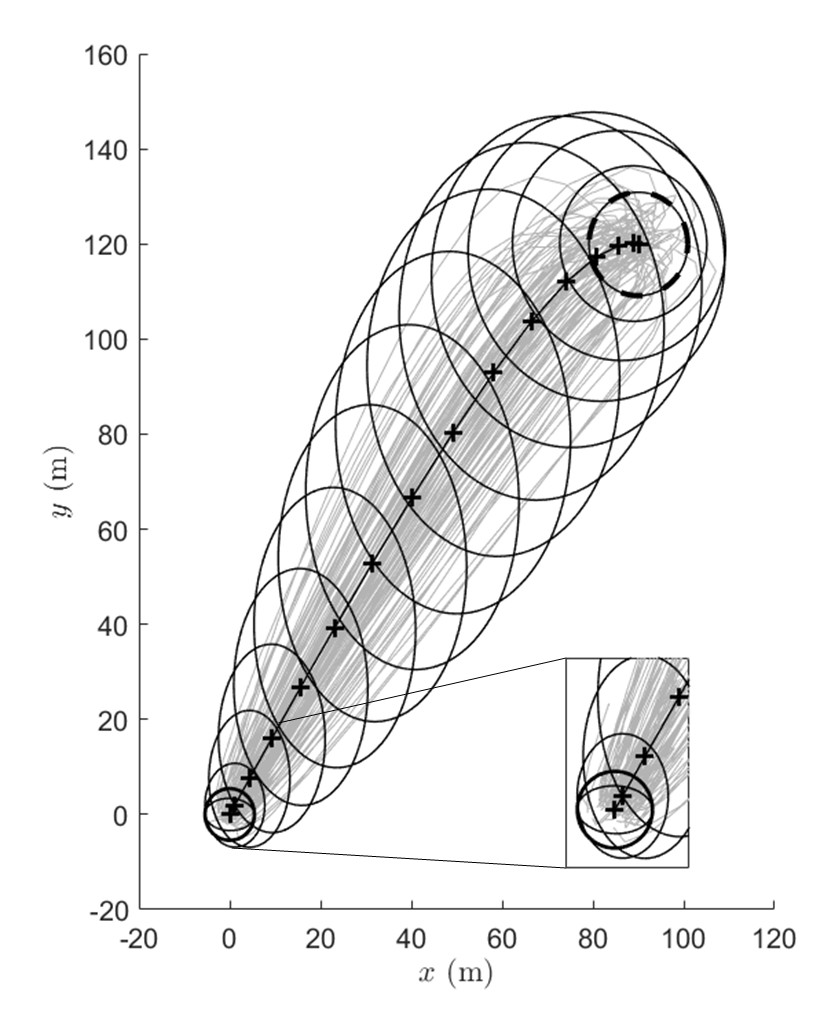}
%		\captionof{figure}{Reverse union bound two-sided approximation.}
%		\label{fig:12a}
%	\end{subfigure}%
%	\begin{subfigure}{.24\textwidth}
%		\centering
%		\hspace*{-0.27cm}
%		\vspace*{0.15cm}
%		\includegraphics[scale=0.19]{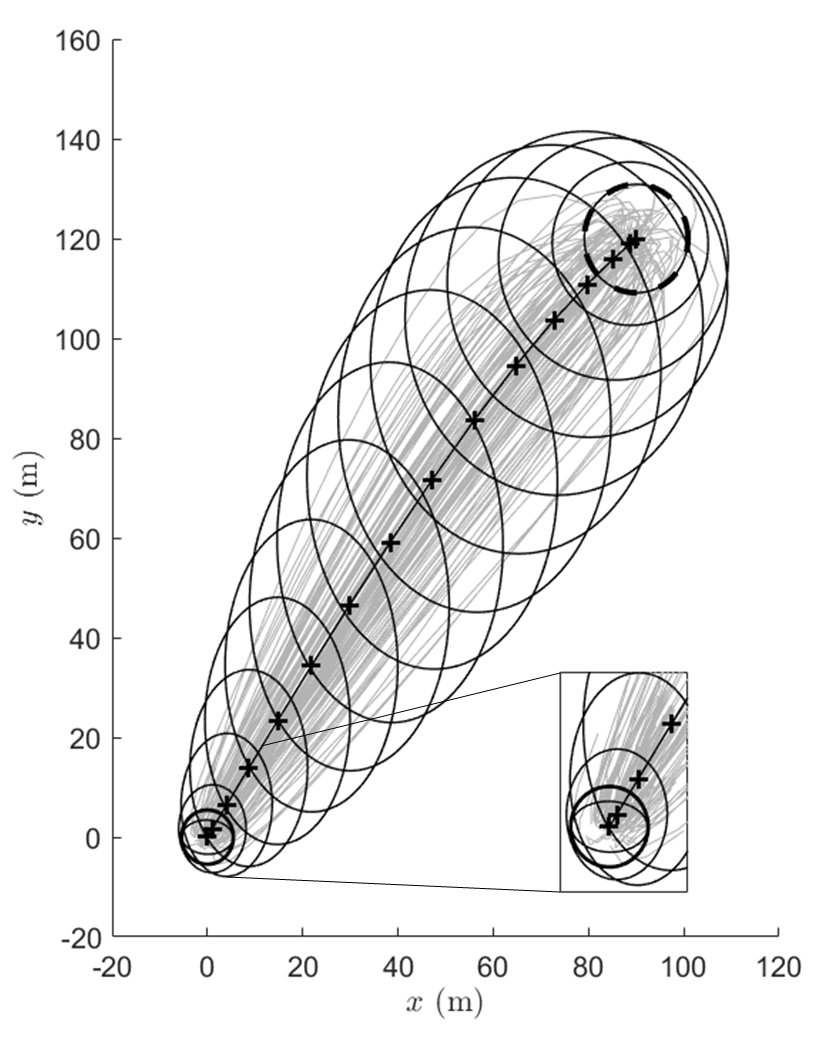}
%		\captionof{figure}{Geometric approximation.}
%		\label{fig:12b}
%	\end{subfigure}
%	\caption{Optimal planar trajectories using convex relaxation of conic chance constraints.}
%	\label{fig:12}
%\end{figure}
%
Figure~\ref{fig:10} compares the results of the three approximations, with the geometric one being the best and the three-cut approximation being less conservative than the RUB approximation, as expected.
Figure~\ref{fig:11} shows the optimal trajectories in the three-dimensional space and in the projection on the $x$-$y$ plane, respectively. 
It should be noted that for both the three-cut and RUB  approximations of the cone constraint, and since we approximated the quadratic constraints as $\mathcal{O}(n)$ SOC constraints at each time step, the IRA algorithm needs to be adjusted as follows.
In Line 5 of Algorithm 1, a constraint is active at time step $k$ if \textit{any} of the constraints in (\ref{eq:two_sided_constraints_1}) are active. 
Similarly, when tightening the constraints in Line~10, the \textit{maximum} true risk $\bar{\delta}_k := \max_j \bar{\delta}_k^j$ is used. 
This is not needed for the geometric approximation because it approximates each cone chance constraint as a \textit{single} convex constraint for each $k$, so the standard IRA algorithm is applicable.

From Table~\ref{table:1}, the geometric approximation actually has the highest terminal covariance of all four IRA methods discussed. 
Other than that, the two optimal inputs and trajectories are very similar from Figure 8, and both successfully steer the distribution of states to the terminal distribution, while satisfying the cone constraints.

\subsection{Comparison with MPC-based Controllers}

In addition to comparing the presented methods with each other, it is also worthwhile to compare these methods to MPC-based methods in the context of the spacecraft rendezvous problem. 
Although there exists a significant literature on the use of MPC-based approaches for satellite rendezvous and proximity operations in space~\cite{MCLDA17,WBEK15,CPK12,LRG14,PZVZKR16}, most of these results assume deterministic dynamics and do not handle directly chance constraints.
To this end, we applied two recent \textit{stochastic} MPC (SMPC) formulations, outlined in~\cite{CSMPC} and \cite{MPC_Farina}, and compared them to the present formulation.
Note that, with the exception of \cite{CSMPC} and \cite{MPC_Farina}, most SMPC methods \cite{robustMPC1,robustMPC2,MCLDA17} assume \textit{bounded} disturbances and/or chance constraints on the input, while in the present case, we assume more general, unbounded disturbances with hard constraints on the input, making the problem much harder to solve as a result.

\begin{figure}[!htb]
	\centering
	\begin{subfigure}{.24\textwidth}
		\centering
		\hspace*{-0.45cm} 
		\includegraphics[scale=0.35]{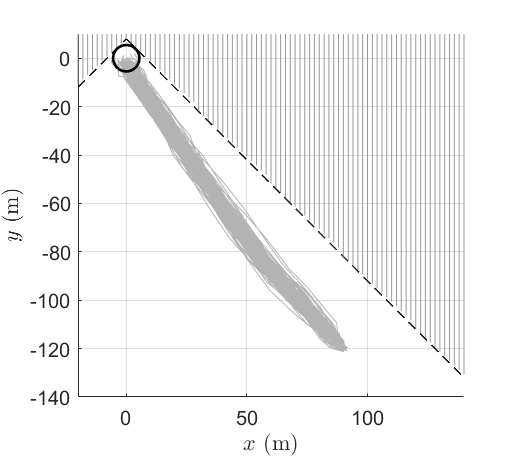}
		\captionof{figure}{Covariance steering SMPC \cite{CSMPC}.}
		\label{fig:13a}
	\end{subfigure}%
	\begin{subfigure}{.24\textwidth}
		\centering
		\vspace*{0.1cm}
		\hspace*{-0.2cm}
		\includegraphics[scale=0.35]{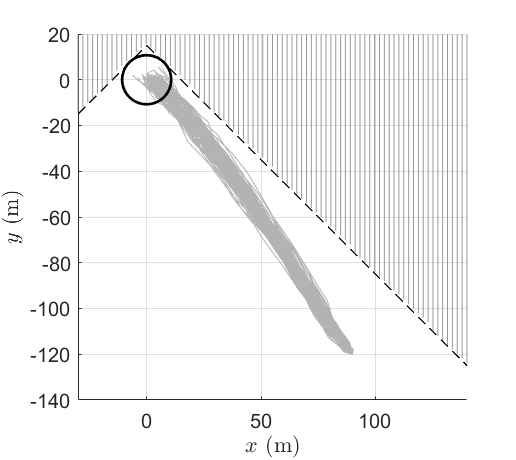}
		\captionof{figure}{Tube SMPC \cite{MPC_Farina}.}
		\label{fig:13b}
	\end{subfigure}
	\caption{Stochastic MPC methods in the rendezvous problem.}
	\label{fig:13}
\end{figure}

Figure~\ref{fig:13} shows the result from the MPC-based methods in \cite{CSMPC} and \cite{MPC_Farina}. 
In \cite{CSMPC}, the authors iteratively solve a covariance steering problem over a receding horizon.
There are two terminal constraints at every time step; the first is on the mean state to be inside a maximally positive invariant (MPI) set defined by $(A+B\tilde{K})\mu\in\mathcal{X}_f^{\mu}$, where $\tilde{K}$ is the associated gain that satisfies $\Sigma = (A+B\tilde{K})\Sigma(A+B\tilde{K})^\intercal + DD^\intercal$, and $\Sigma$ is an \textit{assignable} state covariance \cite{CovarianceAssignmentTheory}. 
The second, and similar to this work, is a constraint on the terminal covariance to be less than or equal to $\Sigma_f$, where $\Sigma_f$ is the aforementioned assignable covariance.
Similarly, in \cite{MPC_Farina} there are again two terminal constraints on the mean and the covariance, but in this case, the MPI set is defined as $(A+BK_{\textrm{LQR}})\mu\in\mathcal{X}_{f}^{\mu}$, where $K_{\textrm{LQR}}$ is the gain computed as the solution of an LQ problem for the nominal system, and $\Sigma_f$ is chosen to solve the Lyapunov equation $\Sigma_f = (A+BK_{\textrm{LQR}})\Sigma_f(A+BK_{\textrm{LQR}})^\intercal + DD^\intercal$. 

In \cite{MPC_Farina}, the algorithm was \textit{not} able to solve the rendezvous problem with the given parameters for a control bound of $u_{\textrm{max}} = 30$ N, as this was not enough control authority to reach the MPI set at the end of the horizon ($N = 10, \Delta t = 2$).
The MPC problems became feasible only when the bounds were relaxed to more than five times that of the current work. 
Additionally, in both SMPC formulations, the designer is not allowed to freely choose the terminal covariance, since in order for the problem to be feasible, the terminal covariance must satisfy certain constraints. 
Lastly, although both MPC controllers successfully steer the system to the origin, there are no guarantees of recursive feasibility for the closed loop system and the solutions are more sub-optimal compared to that of the current work, as there is no optimization over the risks since they are uniformly allocated.
As a result, the \textit{true} risk in these MPC methods is 
much lower than the design goal, as shown in Table I.
Further, there is no current work to date that can incorporate MPC-based methods for conical state spaces as was proposed here.
As such, one potential future application of the proposed CS-IRA algorithm is in the context of SMPC, which would lead to less conservative and more optimal trajectory designs, without having to uniformly allocate risk or use a polyhedral approximation of the LOS cone.

\section{Conclusion}

In this paper, we have incorporated an iterative risk allocation (IRA) strategy to optimize the probability of violating the state constraints at every time step within the covariance steering problem of a linear stochastic system subject to chance constraints.
For the covariance steering problem, we showed that employing IRA not only leads to less conservative solutions that are more practical, but also tends to maximize the final covariance. 
Additionally, the use of IRA in the context of CS with chance constraints results in optimal solutions that have a true risk much closer to the intended design requirements, compared to the use of a uniform risk allocation. 
We also implemented quadratic chance constraints in the form of convex cones, which are more accurate and natural for many engineering applications. 
Using two different relaxation methods these quadratic chance constraints can be made convex and the IRA algorithm can be used to optimize the risk.
%The solutions from the ensuing SDP were similar to the polytopic case, which shows that quadratic constraints 
%could be used to describe the feasible state space. 
Lastly, we also propose a geometric approximation of the cone chance constraints, which is valid when the constraint space is three dimensional, as is often the case when constraining the position of a vehicle, and which is less conservative than either of the two two-sided approximations. 
All proposed relaxations result in convex programs, where the two-stage IRA algorithm is applicable.

% In this paper, we assumed an LTV system, but this work can be extended to nonlinear systems using iterative covariance steering (iCS)\cite{b2}. Further work can look at the applications of IRA to the problems of aircraft landing and powered-descent guidance of a rocket, where the state constraints represents a three dimensional region in space, as opposed to the rendezvous problem, which could be decoupled into planar motion. Lastly, we note that there could be potential extensions of CS under chance constraints with risk allocation to include free final time problems, as is often the case in a practical setting where you want to get from an initial distribution to a terminal distribution, but not necessarily for an apriori fixed time horizon.

\section{Acknowledgment}

This work has been supported by NASA University Leadership Initiative award 80NSSC20M0163 and ONR award N00014-18-1-2828.
The first author also acknowledges support from the Georgia Tech School of Aerospace Engineering Graduate Fellows Program.
The article solely reflects the opinions and conclusions of its authors and not any NASA entity.

\bibliography{Refs} 
\bibliographystyle{IEEEtran}
%\newpage
%\onecolumn

\appendix

\section*{A.~Cone Chance Constraint Relaxation}

\setcounter{equation}{0}
\renewcommand{\theequation}{A.\arabic{equation}}

\begin{lem}
	The quadratic chance constraint
	\begin{equation}  \label{app:eq1}
		\mathbb{P}(\|Ax + b\|_2 \leq c^\intercal \mu + d) \geq 1 - \delta, 
	\end{equation} 
	where $x \sim \mathcal{N}(\mu, \Sigma)$
	%where $\bar{x} = \mathbb{E}[x]$,
	is a relaxation of  the cone chance constraint
	\begin{equation}    \label{app:eq2}
		\!\mathbb{P}(\|Ax + b\|_2 \leq c^\intercal x + d) \geq 1 - \delta.
	\end{equation}  
\end{lem}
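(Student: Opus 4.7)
The plan is to prove the claim by establishing the pointwise probability comparison
\[
\mathbb{P}\bigl(\|Ax+b\|_2 \leq c^{\intercal}\mu + d\bigr) \;\geq\; \mathbb{P}\bigl(\|Ax+b\|_2 \leq c^{\intercal}x + d\bigr),
\]
from which the stated relaxation follows immediately: any Gaussian $x \sim \mathcal{N}(\mu,\Sigma)$ for which the right-hand probability is at least $1-\delta$ must also satisfy the left-hand bound at the same level, so the feasible set of distributions defined by (A.1) contains that defined by (A.2). Denote the events $E_1 := \{\|Ax+b\|_2 \leq c^\intercal x + d\}$ and $E_2 := \{\|Ax+b\|_2 \leq c^\intercal\mu + d\}$, and introduce the zero-mean scalar Gaussian $W := c^\intercal(x-\mu)$, whose density is symmetric about zero; this scalar will drive the entire argument.

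My approach is a case analysis on the sign of $W$. On the half-space $\{W \geq 0\}$ the threshold in $E_2$ is tighter than that in $E_1$, giving $E_2 \cap \{W \geq 0\} \subseteq E_1 \cap \{W \geq 0\}$; on $\{W<0\}$ the inclusion flips to $E_1 \cap \{W<0\} \subseteq E_2 \cap \{W<0\}$. I would then express both probabilities as one-dimensional integrals against the marginal density of $W$, using the fact that the conditional law of $x$ given $W = w$ is again Gaussian, with conditional mean $\mu + (\Sigma c / c^\intercal\Sigma c)\,w$ and a conditional covariance independent of $w$. Pairing the contribution at $w$ with the contribution at $-w$ through the symmetry of the marginal of $W$ should then allow the gain contributed on $\{W\geq 0\}$ to dominate the loss contributed on $\{W<0\}$, yielding the desired inequality.

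The main obstacle is that the natural reflection $x \mapsto 2\mu - x$, which preserves the Gaussian measure and flips the sign of $W$, does \emph{not} preserve $\|Ax+b\|_2$: it maps $Ax+b$ to $2A\mu + b - Ax$ rather than to $-(Ax+b)$. Consequently the conditional CDF of $\|Ax+b\|_2$ given $W=w$ is not a symmetric function of $w$, and pure symmetry does not close the argument. The technical heart of the proof must therefore be a monotonicity/log-concavity step for that conditional CDF --- most naturally via the log-concavity of the Gaussian density combined with the convexity of $x \mapsto \|Ax+b\|_2$, or via the standardizing change of variables $y := \Sigma^{-1/2}(x-\mu)$ together with a rotation aligning $\Sigma^{1/2}c$ with the first coordinate axis, after which the problem reduces to a one-dimensional Gaussian CDF comparison between the thresholds $\tilde d$ and $\|\tilde c\|y_1 + \tilde d$. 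I expect this conditional-integration step, rather than the pointwise case split, to be the real work of the author's appendix proof.
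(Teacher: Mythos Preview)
Your target inequality points the wrong way. You set out to show
\[
\mathbb{P}\bigl(\|Ax+b\|_2 \le c^{\intercal}\mu+d\bigr)\;\ge\;\mathbb{P}\bigl(\|Ax+b\|_2 \le c^{\intercal}x+d\bigr),
\]
but this is false in general: with $x\sim\mathcal{N}(0,1)$, $A=1$, $b=0$, $c=1$, $d=0$ the left side is $\mathbb{P}(|x|\le 0)=0$ while the right side is $\mathbb{P}(|x|\le x)=\mathbb{P}(x\ge 0)=\tfrac12$. So no conditional--CDF monotonicity or log--concavity argument can close the gap you identified; the failure of the reflection $x\mapsto 2\mu-x$ to preserve $\|Ax+b\|_2$ is a symptom of the inequality itself being false, not merely a technical obstacle.

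The confusion stems from the paper's non-standard use of the word ``relaxation.'' What the appendix actually proves (and what Section~IV needs) is the \emph{opposite} implication: enforcing (A.1) is \emph{sufficient} for (A.2), i.e.\ $\mathbb{P}(\xi\le\eta)\ge\mathbb{P}(\xi\le\bar\eta)$ with $\xi:=\|Ax+b\|_2$, $\eta:=c^{\intercal}x+d$, $\bar\eta:=c^{\intercal}\mu+d$. The paper's argument writes $\eta=\bar\eta+z\sqrt{c^{\intercal}\Sigma c}$ for $z\sim\mathcal{N}(0,1)$, expresses $\mathbb{P}(\xi\le\eta)$ as a double integral against the joint density $f_{\xi,z}$, splits the inner integral at $\bar\eta$, and drops the residual term
\[
\int_{-\infty}^{\infty}\int_{\bar\eta}^{\bar\eta+y\sqrt{c^{\intercal}\Sigma c}} f_{\xi,z}(x,y)\,\d x\,\d y
\]
as nonnegative to obtain the lower bound $\mathbb{P}(\xi\le\bar\eta)$. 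The crux is therefore the sign of this residual integral, not a pointwise event inclusion; your sign--of--$W$ case split is related in spirit, but you are using it to push the inequality in the direction that does not hold.
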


\begin{proof}
	Since $x \sim \mathcal{N}(\mu,\Sigma)$ it follows that $\xi := \|Ax + b\|_2$ follows a non-central $\chi^2$ distribution with probability density function $f_{\xi}(x)$~\cite{stats}.
	Let $\eta := c^\intercal x + d$ and notice that  $\eta \sim \mathcal{N}(c^\intercal\mu+d,c^\intercal\Sigma c)$. 
	The chance constraint (\ref{app:eq2}) then takes the form $\mathbb{P}(\xi \leq \eta) \geq 1 - \delta$. 
	The probability that one random variable is less than or equal another random variable is given by
	\begin{equation}  \label{app:eq3}
		\mathbb{P}(\xi \leq \eta) = \int_{-\infty}^{\infty}\int_{-\infty}^{y} f_{\xi,\eta}(x,y) \,\d x \,  \d y,
	\end{equation}
	where $f_{\xi,\eta}(x,y)$ is the \textit{joint} probability distribution function of the random variables $\xi$ and $\eta$.
	%The first integral on the right is over the domain of $\xi$ while the second is over the domain of $\eta$. 
	Next, 
	%since $\eta$ is a non-standard Gaussian, 
	write $\eta = c^\intercal \mu + d + z \sqrt{c^\intercal \Sigma c}$, where $z \sim \mathcal{N}(0,1)$ with probability density $f_z(y)$ and let $\bar{\eta} = c^\intercal \mu + d$.
	The inner integral in (\ref{app:eq3}) then becomes
	\begin{align*}
		\int_{-\infty}^{y} &f_{\xi,\eta}(x,y) \,\d x \, = \int_{-\infty}^{\bar{\eta} + y \sqrt{c^\intercal \Sigma c}}f_{\xi, z}(x,y) \, \d x \\
		&= \int_{-\infty}^{\bar{\eta}}f_{\xi,z}(x,y) \, \d x + \int_{\bar{\eta}}^{\bar{\eta} + y \sqrt{c^\intercal\Sigma c}} f_{\xi,z}(x,y) \, \d x.
	\end{align*}
	It follows that
	\begin{align} \label{app:eq30}
		&\mathbb{P}(\xi \leq \eta) = \int_{-\infty}^{\infty}\int_{-\infty}^{\bar{\eta}}f_{\xi,z}(x,y) \, \d x \, \d y \nonumber \\
		&\qquad\qquad\qquad+  \int_{-\infty}^{\infty}\int_{\bar{\eta}}^{\bar{\eta} + y \sqrt{c^\intercal\Sigma c}} f_{\xi,z}(x,y) \, \d x \, \d y \nonumber \\
		&\ \quad\qquad\qquad \ \ \geq \int_{-\infty}^{\infty}  \int_{-\infty}^{\bar{\eta}}f_{\xi,z}(x,y) \, \d x \, \d y.
	\end{align}
	Noticing that 
	\begin{equation*}
		\int_{-\infty}^{\infty}  f_{\xi,z}(x,y) \, \d y = f_\xi(x),
	\end{equation*}
	%By definition of a probability density function, it is always non-negative, which implies
	the last expression in (\ref{app:eq30}) implies that
	\begin{equation*}
		\mathbb{P}(\xi \leq \eta) \geq   \int_{-\infty}^{\bar{\eta}}   f_\xi(x) \, \d x  = \mathbb{P}(\xi \leq \bar{\eta}),
	\end{equation*}
	which achieves the desired result.
	In order words, 
	if the relaxed chance constraint $\mathbb{P}(\xi \leq \bar{\eta}) \geq 1 - \delta$ is satisfied, then the \textit{original} chance constraint 
	$\mathbb{P}(\xi \leq \eta) \geq 1 - \delta$
	is satisfied as well.
\end{proof}

\section*{B.~Reverse Union Bound}

\setcounter{equation}{0}
\renewcommand{\theequation}{B.\arabic{equation}}

\begin{lem}
	Let the events $A_1,\ldots,A_n$, such that $\mathbb{P}(A_i) \geq \delta_i$, for some $\delta_i \ge 0$, for all $i = 1,\ldots,n$.
	Then,
	\begin{equation}
		\mathbb{P}\left(\ \bigcap_{i=1}^{n} A_i \right) \geq \sum_i \delta_i - (n-1). \label{eq:Fact1}
	\end{equation}
\end{lem}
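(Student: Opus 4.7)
The plan is to reduce the reverse union bound to the classical (Boole) union bound via complementation. First I would introduce the complementary events $B_i := A_i^{c}$, so that $\mathbb{P}(B_i) = 1 - \mathbb{P}(A_i) \le 1 - \delta_i$ for each $i = 1,\ldots,n$. The point of this move is that the intersection appearing on the left-hand side of \eqref{eq:Fact1} becomes, by De Morgan's law, the complement of a union: $\bigcap_{i=1}^{n} A_i = \bigl(\bigcup_{i=1}^{n} B_i\bigr)^{c}$, which converts the problem from a lower bound on an intersection into an upper bound on a union.

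Next, I would apply Boole's inequality to the union of the $B_i$:
\begin{equation*}
\mathbb{P}\!\left(\bigcup_{i=1}^{n} B_i\right) \le \sum_{i=1}^{n}\mathbb{P}(B_i) \le \sum_{i=1}^{n}(1-\delta_i) = n - \sum_{i=1}^{n}\delta_i.
\end{equation*}
Taking complements then gives
\begin{equation*}
\mathbb{P}\!\left(\bigcap_{i=1}^{n} A_i\right) = 1 - \mathbb{P}\!\left(\bigcup_{i=1}^{n} B_i\right) \ge 1 - n + \sum_{i=1}^{n}\delta_i = \sum_{i=1}^{n}\delta_i - (n-1),
\end{equation*}
which is exactly \eqref{eq:Fact1}.

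There is essentially no obstacle here: the only subtle point is making sure the hypothesis $\delta_i \ge 0$ is used only to guarantee that $1-\delta_i$ is a valid probability upper bound, and noting that the inequality is informative only when $\sum_i \delta_i > n-1$ (otherwise the right-hand side is non-positive and the bound is trivial). Alternatively, one could give an inductive argument using the identity $\mathbb{P}(A \cap B) = \mathbb{P}(A) + \mathbb{P}(B) - \mathbb{P}(A \cup B) \ge \mathbb{P}(A) + \mathbb{P}(B) - 1$ as the $n=2$ base case and then iterating, but the De Morgan route is shorter and avoids the bookkeeping of an induction.
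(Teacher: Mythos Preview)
Your proof is correct and follows essentially the same approach as the paper's own proof: pass to complements via De~Morgan, apply Boole's inequality to bound the union of the complements, and then take complements back. The only cosmetic difference is that you name the complementary events $B_i$ explicitly, whereas the paper works directly with $A_i^c$.
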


\begin{proof}
	From the law of total probability, we have that
	\begin{equation}  \label{eq:C1}
		\mathbb{P}\left(\ \bigcap_{i=1}^{n} A_i \right) = 1 - \mathbb{P}\left(\bigcup_{i=1}^{n} {A}^c_i\right), 
	\end{equation}
	where ${A}^c_i$ denotes the complement of the event $A_i$. 
	Note that 	$\mathbb{P} (A^c_i) \leq 1 - \delta_i$ for all $i = 1,\ldots,n$.
	% 	From De Morgan's law,
	% 	\begin{equation}
	% 		\mathbb{P}\left( {\bigcap_{i=1}^{n} A_i} \right)^c = \mathbb{P}\left(\bigcup_{i=1}^{n} {A}^c_i\right). \label{eq:C2}
	% 	\end{equation}
	From Boole's inequality, it follows that
	\begin{equation}
		\mathbb{P}\left(\bigcup_{i=1}^{n} {A}^c_i\right) \leq \sum_{i=1}^{n} \mathbb{P}({A}^c_i). \label{eq:C3}
	\end{equation}
	Combining (\ref{eq:C1}) - (\ref{eq:C3}) yields 
	\begin{align*}
		\mathbb{P}\left( {\bigcap_{i=1}^{n} A_i} \right) &\geq 1 - \sum_{i=1}^{n} \mathbb{P}({A}^c_i) \\
		&\geq 1 - \sum_{i=1}^{n} (1 - \delta_i) \\
		&= \sum_{i=1}^{n} \delta_i - (n - 1),
	\end{align*}
	which achieves the desired result.
\end{proof}

%\begin{comment}
\section*{C.~Input Hard Constrained Covariance Controller}

\setcounter{equation}{0}
\renewcommand{\theequation}{C.\arabic{equation}}

We assume that the hard input constraints on the control are affine, i.e., they are of the form 
\begin{equation}
	\alpha_{u,s}^\intercal F_k U \leq \beta_{u,s}, \quad s = 1,\ldots,N_c.
\end{equation}
\begin{theorem}[\bf \!\!\cite{IH}]
	The control law
	\begin{equation}
		u_k = v_k + K_k z_k,
	\end{equation}
	where $z_k$ is governed by the dynamics 
	\begin{align}
		z_{k+1} &= Az_k + \phi(w_k), \\
		z_0 &= \phi(y_0), \quad y_0 = x_0 - \mu_0,
	\end{align}
	where $\phi : \mathbb{R}^n \rightarrow \mathbb{R}^n$ is an element-wise symmetric saturation function with pre-specified saturation value of the $i$th entry of the input $y_i^{\mathrm{max}} > 0$ as
	\begin{equation}
		\phi_i(y) = \mathrm{max}(-y_i^{\mathrm{max}},\mathrm{min}(y_i,y_i^\mathrm{max})),
	\end{equation}
	converts Problem~1 
	to the following convex programming problem that constrains the control to a maximum saturation value
	\begin{align}
		&\min_{V,K,\Omega} J(V,K,\Omega) = \textrm{tr}\bigg(\bar{Q} 
		\begin{bmatrix}
			I & \mathcal{B} K
		\end{bmatrix} 
		\Sigma_{XX} 
		\begin{bmatrix}
			I \\
			K^\intercal \mathcal{B}^\intercal
		\end{bmatrix}\bigg) \nonumber\\
		&+ \mathrm{tr}(\bar{R}K\Sigma_{UU}K^\intercal) + (\mathcal{A}\mu_0 + \mathcal{B} V)^\intercal\bar{Q}(\mathcal{A}\mu_0 + \mathcal{B} V) + V^\intercal \bar{R}V \nonumber\\
		&\textrm{subject to} \nonumber\\
		& \mathbb{P}(E_k X \notin \mathcal{X}) \leq \delta_k, ~~ k = 1,\ldots, N \label{eq:CCgeneral}\\
		& \sum_{k = 1}^{N} \delta_k \leq \Delta, \\
		& H F_k V + \Omega^\intercal \sigma \leq h, \\
		& H F_k K 
		\begin{bmatrix}
			\mathcal{A} & \mathcal{D}
		\end{bmatrix}
		= \Omega^\intercal S, \\
		& \Omega \geq 0, \\
		& \mu_f = E_N(\mathcal{A}\mu_0 + \mathcal{B}V), \\ 
		& \Sigma_f \geq E_N
		\begin{bmatrix}
			I & \mathcal{B}K
		\end{bmatrix}
		\Sigma_{XX}
		\begin{bmatrix}
			I\\
			K^\intercal\mathcal{B}^\intercal
		\end{bmatrix}
		E_N^\intercal,
	\end{align}
	where $\Omega \in \mathbb{R}^{2(N+1)n\times N_c}$ is a decision (slack) variable,
	\begin{align}
		\Sigma_{XX} &= 
		\begin{bmatrix}
			\mathcal{A}& \\
			& \mathcal{A}
		\end{bmatrix}
		\begin{bmatrix}
			\Sigma_0 & \mathbb{E}[y_0\phi(y_0)^\intercal]\\
			\mathbb{E}[\phi(y_0)y_0^\intercal] & \mathbb{E}[\phi(y_0)\phi(y_0)^\intercal]
		\end{bmatrix}
		\begin{bmatrix}
			\mathcal{A}^\intercal & \\
			& \mathcal{A}^\intercal
		\end{bmatrix} \nonumber\\
		&+ 
		\begin{bmatrix}
			\mathcal{D}& \\
			& \mathcal{D}
		\end{bmatrix}
		\begin{bmatrix}
			I & \mathbb{E}[W\phi(W)^\intercal]\\
			\mathbb{E}[\phi(W)W^\intercal] & \mathbb{E}[\phi(W)\phi(W)^\intercal]
		\end{bmatrix}
		\begin{bmatrix}
			\mathcal{D}^\intercal & \\
			& \mathcal{D}^\intercal
		\end{bmatrix},
	\end{align}
	\begin{equation}
		\Sigma_{UU} = \mathcal{A}\mathbb{E}[\phi(y_0)\phi(y_0)^\intercal]\mathcal{A}^\intercal + \mathcal{D}\mathbb{E}[\phi(W)\phi(W)^\intercal]\mathcal{D}^\intercal.
	\end{equation}
	Further,
	\begin{align}
		H &= [\alpha_{u,1}, \ldots, \alpha_{u,N_c}]^\intercal \in \mathbb{R}^{N_c \times m}, \\
		h &= [\beta_{u,1}, \ldots, \beta_{u,N_c}]^\intercal \in \mathbb{R}^{N_c}. 
	\end{align}
	In addition, $S \in \mathbb{R}^{2(N+1)n \times (N+1)n}$ and $\sigma\in\mathbb{R}^{2(N+1)n}$ are constant, given by
	\begin{align}
		S_{2i-1} &= e_{2i-1}^\intercal, \ S_{2i} = -e_{2i}^\intercal, \\ 
		\sigma_{2i-1} &= y_i^{\mathrm{max}}, \ \sigma_{2i} = y_i^{\mathrm{max}},
	\end{align}
	where $S_i$ denotes the $i$th row of $S$, and $e_i\in\mathbb{R}^{2(N+1)n}$ is a unit vector with $i$th element 1.
\end{theorem}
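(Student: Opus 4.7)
The plan is to exploit the affine-disturbance feedback parametrization with a saturated auxiliary state to decouple the mean and covariance dynamics, and then recast the hard input constraint via robust-optimization duality. First, I would stack the control sequence as $U = V + KZ$, where $Z := [z_0^\intercal,\ldots,z_{N-1}^\intercal]^\intercal$ satisfies $Z = \mathcal{A}\phi(y_0) + \mathcal{D}\phi(W)$ under the lifted dynamics. Because $\phi$ is element-wise odd-symmetric and both $y_0$ and $W$ are zero-mean Gaussian, each component of $\phi(y_0)$ and $\phi(W)$ has zero mean, hence $\bar{Z} = 0$ and $\bar{U} = V$. Consequently the mean state satisfies $\bar{X} = \mathcal{A}\mu_0 + \mathcal{B}V$ as in the standard CS case, and the error $\tilde{X} = X - \bar{X}$ is a linear functional of $(y_0, W, \phi(y_0), \phi(W))$ through $\mathcal{A}$, $\mathcal{D}$ and $\mathcal{B}K$.

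Second, I would derive the expressions for $\Sigma_{XX}$ and $\Sigma_{UU}$ by propagating the joint second moments of the augmented vectors $[y_0^\intercal, \phi(y_0)^\intercal]^\intercal$ and $[W^\intercal, \phi(W)^\intercal]^\intercal$ through the lifted system matrices. The cross-moments $\mathbb{E}[y_0 \phi(y_0)^\intercal]$ and $\mathbb{E}[W \phi(W)^\intercal]$ appear exactly in the off-diagonal blocks quoted in the theorem. Plugging these into $J = \mathbb{E}[X^\intercal \bar{Q} X + U^\intercal \bar{R} U]$ and separating $\bar{X}$ from $\tilde{X}$ (the cross-term vanishes because $\bar{Z}=0$) yields the quoted objective, with trace components on the covariance part and a quadratic in $V$ for the mean part. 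The terminal mean equality and the semidefinite relaxation of the terminal covariance then follow verbatim from the argument used for Problem 2, after replacing the ideal closed-loop error covariance with the saturated analogue $\Sigma_{XX}$.

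Third, and most importantly, I would tackle the hard input constraint $H F_k U \leq h$. Writing $F_k U = F_k V + F_k K Z = F_k V + F_k K [\mathcal{A}\ \mathcal{D}]\zeta$ with $\zeta := [\phi(y_0)^\intercal, \phi(W)^\intercal]^\intercal$, each coordinate of $\zeta$ is bounded by its saturation limit by construction, so $\zeta$ lives in the polytope $\mathcal{P} = \{\zeta : S\zeta \leq \sigma\}$ encoded by the rows $S_{2i-1} = e_{2i-1}^\intercal$, $S_{2i} = -e_{2i}^\intercal$ and entries $\sigma_{2i-1} = \sigma_{2i} = y_i^{\max}$. Requiring the input bound to hold for \emph{every} realization of $\zeta \in \mathcal{P}$ is a robust LP in $\zeta$, and by LP duality the worst-case maximization $\max_{\zeta \in \mathcal{P}} H F_k K [\mathcal{A}\ \mathcal{D}] \zeta$ equals $\min_{\Omega \geq 0} \{\Omega^\intercal \sigma : \Omega^\intercal S = H F_k K [\mathcal{A}\ \mathcal{D}]\}$. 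Substituting this dual value back into the constraint produces exactly the pair of conditions on $\Omega$, $V$, and $K$ stated in the theorem.

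The main obstacle will be the robust-to-deterministic reformulation: one must verify that $\mathcal{P}$ is a \emph{tight} outer description of the support of $\zeta$ (so that no conservatism beyond what $\phi$ already introduces is added), check that Slater's condition holds so that strong duality applies and $\Omega$ can be introduced without loss, and confirm that the cost decomposition remains correct despite $X$ and $U$ no longer being Gaussian. A secondary subtlety concerns (\ref{eq:CCgeneral}): the standard Gaussian reformulation of the chance constraint via $\Phi^{-1}$ is no longer exact once saturation distorts the distribution of $X$, so one either accepts a mild conservatism (since $\phi$ contracts tails) or replaces $\Phi^{-1}$ with a distribution-free bound such as Cantelli's inequality. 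Everything else — the mean/covariance decoupling, the trace cost, the SDP terminal relaxation — follows routinely once the symmetry of $\phi$ has been used to establish $\bar{Z}=0$.
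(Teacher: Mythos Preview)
The paper does not actually prove this theorem; it is quoted verbatim from~\cite{IH} and only followed by a remark about the non-Gaussian state distribution and the potential need for Chebyshev--Cantelli-type chance-constraint bounds. So there is no ``paper's own proof'' to compare against here.

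That said, your sketch is correct and is essentially the argument of~\cite{IH}. The three pillars you identify are the right ones: (i) the odd symmetry of $\phi$ together with the zero-mean Gaussianity of $y_0$ and $W$ gives $\bar Z = 0$, which preserves the mean/covariance decoupling and yields the quadratic-in-$V$ plus trace-in-$K$ cost; (ii) writing $\tilde X = [I\ \ \mathcal{B}K]\,[Y^\intercal\ Z^\intercal]^\intercal$ with $Y=\mathcal{A}y_0+\mathcal{D}W$ and $Z=\mathcal{A}\phi(y_0)+\mathcal{D}\phi(W)$ immediately produces the block-structured $\Sigma_{XX}$ and $\Sigma_{UU}$ in the statement; and (iii) the hard input constraint becomes a robust LP over the box $\{\zeta:S\zeta\le\sigma\}$ supporting $\zeta=[\phi(y_0)^\intercal\ \phi(W)^\intercal]^\intercal$, and LP strong duality introduces the multiplier $\Omega\ge 0$ with $\Omega^\intercal S = HF_kK[\mathcal{A}\ \mathcal{D}]$ and $HF_kV+\Omega^\intercal\sigma\le h$. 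Your caveat about~(\ref{eq:CCgeneral}) is also exactly the point the paper raises after the theorem: once $\phi$ is in the loop the state is no longer Gaussian, so the $\Phi^{-1}$ reformulation is only approximate and a distribution-free bound may be substituted. Nothing is missing from your plan.
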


%\begin{proof}
%	It can be shown that 
%	\begin{equation}
%		\mathbb{E}[\tilde{X}\tilde{X}^\intercal] = 
%		\begin{bmatrix}
%			I & \mathcal{B}K
%		\end{bmatrix}
%		\Sigma_{XX}
%		\begin{bmatrix}
%			I \\
%			K^\intercal \mathcal{B}^\intercal
%		\end{bmatrix},
%	\end{equation}
%	from which it follows that the standard deviation of the state vector $x_k = E_k X$ is 
%	\begin{equation}
%		\sqrt{E_k \mathbb{E}[\tilde{X}\tilde{X}^\intercal] E_k^\intercal} = \|\Sigma_{XX}^{1/2}
%		\begin{bmatrix}
%			I & \mathcal{B}K
%		\end{bmatrix}^\intercal E_k^\intercal \|.
%	\end{equation}
%	The rest of the proof then follows from the results in \cite{IH}.
%\end{proof}

\medskip

Note that the saturation of the input will result, in general, in a non-Gaussian distribution of the state.
As a result, the chance constraint inequalities (\ref{eq:24}) 
must be replaced by another set of inequalities, for example, of the Chebyshev-Cantelli type~\cite{MaOl:1960}.
More details can be found in~\cite{IH}.
For the spacecraft rendezvous problem in Section~\ref{sec:NumEx}, it turned out, however, that the original chance constraint formulation worked well, which means that the  Chebyshev-Cantelli inequality formulation of the chance constraints may be overly conservative for this problem.

\end{document}